\newtheorem{theorem}{Theorem}[section]
\newtheorem{corollary}[theorem]{Corollary}
\newtheorem{lemma}[theorem]{Lemma}
\newtheorem{proposition}[theorem]{Proposition}
\theoremstyle{definition}
\newtheorem{definition}[theorem]{Definition}
\theoremstyle{remark}
\newtheorem{remark}[theorem]{\sc Remark}
\newtheorem{example}[theorem]{\sc Example}
\newtheorem{question}[theorem]{\sc Question}
\newtheorem{note}[theorem]{\sc Note}
\renewcommand{\Box}{\square}    %\diamond
\newcommand{\Aut}{{\rm{Aut}}}
\newcommand{\coker}{{\rm{coker\hspace{2pt}}}}
\newcommand{\Mon}{{\rm{Mon}}}
\newcommand{\im}{{\rm{Im\hspace{2pt}}}}
\newcommand{\cl}{{\rm{cl}}}
\newcommand{\ity}{{\infty}}
\newcommand{\m}{\setminus}
\newcommand{\fin}{\hspace*{\fill}$\Box$\vspace*{2mm}}
\newcommand{\bC}{{\mathbb C}}
\newcommand{\bP}{{\mathbb P}}
\newcommand{\bZ}{{\mathbb Z}}
\newcommand{\bQ}{{\mathbb Q}}
\title[Maps from a quasi-projective surface to a curve]{On the geometry of regular maps from a quasi-projective surface to a curve}
\author{A.J. Parameswaran}
\address{School of Mathematics, Tata Institute of Fundamental
Research, Homi Bhabha Road, Mumbai 400005, India}
\email{param@math.tifr.res.in}
\author{M. Tib\u ar}
\address{Math\'ematiques, UMR-CNRS 8524, Universit\'e Lille 1,
59655 Villeneuve d'Ascq, France.}
\email{tibar@math.univ-lille1.fr}
\subjclass[2000]{14D05, 32S40, 32S35, 14D06, 58K05, 57R45}
\keywords{monodromy, quasi-projective surfaces, mixed Hodge structure, fibrations}
\date{March 30, 2015}
\begin{document}

\begin{abstract}

By exploring the consequences of the triviality of the monodromy group for a class of surfaces of which the mixed Hodge structure is pure, we extend results of Miyanishi and Sugie, Dimca, Zaidenberg  and Kaliman.
\end{abstract}
\maketitle
\section{Introduction}\label{intro}

In case of a polynomial function $P : \bC^2 \to \bC$,
 Miyanishi and Sugie \cite{MS} proved that the global monodromy group acting on $H^1$ of the general fibre is trivial if and only if the general fibre of $P$ is rational and $P$ is \textit{simple} (cf Definition \ref{d:simple}).
Dimca \cite[Theorem 1, (ii)]{Di} showed that in this statement the monodromy group can be replaced by the monodromy at infinity (i.e. around a very large circle in $\bC$). Kaliman \cite{Kal} proved that the number of reducible fibres of a primitive map is at most 
$\delta -1$, where $\delta = \#$horizontal components. 
Dimca also  observed that, in addition to the triviality of the monodromy, if all the fibres of the polynomial $P$ are irreducible, then  $P$ is linearisable, by using the above cited result of Kaliman and the celebrated theorem of Abhyakar-Moh-Suzuki. 
One has also studied the monodromy of a polynomial function on $\bC^2$ in relation to the monodromy of a plane curve germ, as well as extensions to several variables \cite{Di, ACD, NN1}.

We consider here instead of $\bC^2$ some other classes of surfaces verifying certain Hodge theoretic conditions in order to draw topological properties of the monodromy. This is also motivated by long standing research on classes of affine surfaces. In several papers \cite{GP}, \cite{GPS}, \cite{GS1}, \cite{GS2}
Gurjar, Pradeep  and Shastri proved that every
$\bQ$-homology plane is rational, question which was raised by Miyanishi. This is already a big class of varieties, since it has moduli.

Let $X$ be a nonsingular, quasi-projective, connected surface,  and let $f: X \to C$ be a regular map onto a nonsingular affine curve.   Let $\bar f : \bar X \to \bar C$ be some  resolution of the indeterminacy points at infinity, namely $\bar X$ and $\bar C$ are nonsingular projective spaces and  such
that $D= \bar X \m X$ is a simple normal crossing divisor, i.e.,  $D = \cup_i D_i$, where the irreducible components $D_i$ are smooth divisors which intersect transversely and not more than two pass through the same point. We shall refer to $\bar f$ as ``compact resolution'' and we shall work throughout the paper with the condition that $D$ is connected.

 We study here the monodromy of the regular map $f$ in relation to that of $\bar f$. 
The restriction of $\bar f$ to some component $D_i$ is either constant and then one says that $D_i$ is \textit{vertical}, or $\bar f_{|} : D_i \to \bar C$ is onto,  it is namely a (ramified) covering of degree $d_i$ and we 
say that $D_i$ is a \textit{horizontal}  component. We denote by $F_p$ and $\bar F_p$ the fibres of $f$ and of $\bar f$ over some point $p\in C$ or $p\in \bar C$, respectively. Also, $F$ and $\bar F$ without lower index will stand for general fibres of $f$ and $\bar f$, respectively. One has to take into account that $\bar F$ is indeed the closure of the general fibre $F$, whereas $\bar F_p$ is not that of $F_p$, since it may contain vertical components.

The aim of this paper is to study maps from surfaces $X$ subject to a specific Hodge theoretical property which we call ``$H^2$-pure and $H^3$-pure surfaces'', meaning that the mixed Hodge structure of $H^i(X)$ is pure of weight $i$, see Definition \ref{d:pure}, Remark \ref{r:pure} and Example \ref{r:pure}.

As explained in \S \ref{pure}, this property 
encompasses a whole bunch of topological conditions  and controls the cohomology of the fibres of $f$ along with the triviality of the monodromy. Let us point out that the $H^i$-purity is an intrinsic Hodge theoretic property of $X$ whereas its topological interpretations given in \S \ref{pure} 
are in terms of the divisor $D=\bar X \m X$ and of the map $f : X \to C$.

We show that we can treat with the same condition ``$H^2$ and $H^3$-pure surface'' not only Miyanishi-Sugie's and Dimca's type questions but also questions raised by Kaliman's and Zaidenberg's results \cite{Kal}, \cite{Za1}.

The paper is organised as follows.
 In \S \ref{s:monodr} we prove that if the monodromy group $\Mon^1 \bar f$ acting on the cohomology group $H^1(\bar F)$ of the general fibre is trivial then the restriction morphism $H^1(\bar X) \to H^1(\bar F_p)$ is a surjection for all $p\in \bar C$. We moreover show that
the kernel of this morphism is precisely $H^1(\bar C)$, for any choice of $p\in \bar C$, whenever the general fibre is connected. 
The proof uses Deligne's invariant cycle theorem and several mixed Hodge theoretic facts. 
We then relate the triviality of the monodromy group of $f$ to that of $\bar f$ and to the simplicity of $f$ (Propositions \ref{p:trivmon} and \ref{p:equiv}).

In \S \ref{pure}, we derive topological consequences of the purity of the cohomology (Lemma \ref{l:h2},  Proposition \ref{p:pure}, Corollary \ref{c:tree}).
We analyse the horizontal and vertical components of a map from a $H^2$- and $H^3$-pure surface $X$ onto an affine curve $C$ (Theorem \ref{c:points}) and combine with Deligne's invariant cycle theorem (also used by Dimca in \cite{Di}) in order to show the determination of the monodromy group by a single loop around some point $c\in \bar C \m C$ (Theorems \ref{t:genus}, \ref{t:monpoint}), thus extending  results by Miyanishi-Sugie \cite{MS} and Dimca \cite{Di}.

In \S \ref{s:bound} we prove that the number of reducible fibres of $f$ is bounded from above by $\delta -1 + b_2(X)$, where $\delta$ is the number of horizontal components of $D$ (Theorem ~\ref{t:one}). This takes into account the non-trivial cohomology of $X$ and extends  Kaliman's result  for $X=\bC^2$ \cite{Kal} where this bound is $\delta -1$.  In a different direction there are very interesting bounds of the \emph{total order of reducibility} of a polynomial  
by Y. Stein \cite{St},  of pencils of plane projective curves by Lorenzini \cite{Lo}, which are  altogether generalised (including Kaliman's result \cite{Kal}) to pencils of hypersurfaces by Vistoli \cite{Vi}.

We finally show in \S \ref{localtriviality} that if the monodromy group is trivial and all the fibres are irreducible and reduced, then $f$ is a locally trivial fibration (Theorem \ref{t:fibration}(b)), thus extending Dimca's result \cite{Di}. 

Moreover, if in the above statement we replace ``all fibres are  reduced'' by ``at least one fibre is non-reduced '' then we can show that the general fibre of $f$ is either $\bC$ or $\bC^*$ (Theorem \ref{t:fibration}(c)).  This is closely
related to Zaidenberg's result \cite[Lemma 3.2(b)]{Za1} since the conclusions are the same, whereas the hypotheses are totally different: Zaidenberg has a local assumption that $f$ is affine or Stein but no other assumption on the local geometry of the divisor at infinity, while we assume the
$H^2$- and $H^3$-purity of $X$. On the other hand, \cite{Za1} assumes the constancy of the Euler characteristic while we assume here the  triviality of the monodromy group. 

\smallskip
\noindent
\emph{Acknowledgement.} The authors express their gratitude to the Mathematische Institut Oberwolfach for hosting them in the RiP program. They thank the School of Mathematics at TIFR Mumbai, and the Paul Painlevé Laboratory at Université Lille 1 for supporting their visits during which this project was accomplished.

%%%%%%%%%%%%%%%%%%%%%%%%%%%%%%%%%%%%
%%%%%%%%%%%%%%%%%%%%%%%%%%

\section{Open and projective monodromies}\label{s:monodr}

It is well known that there is a finite set of points $A\subset \bar C$ such that, for any $c\in \bar C \m A$ the restriction:
\[  \bar f_| : \bar X \cap \bar f^{-1}(\bar C \m A) \to \bar C \m A \]
is a proper stratified submersion hence a locally trivial fibration. In particular the map $f_| :  X \cap f^{-1}(C \m A) \to  C \m A$ is a locally trivial fibration. 

Let $\bar F$ and $F$ denote the general fibres of $\bar f_|$ and $f_|$ respectively. The points $\bar F\m F$ are precisely the intersections of $\bar F$ with the horizontal curves. For any point $p\in \bar F\m F$ one defines a generator of the group $H^1(F) \simeq H_1(F)$ as the equivalence class of the oriented boundary of a small enough disk at the point. It is well-known that there is a single relation among all these $\delta := |\bar F\m F|$ generators.

We consider homology and cohomology with coefficients in $\bC$. The monodromy groups $\Mon^k \bar f$ and $\Mon^k f$, for $k=0, 1, 2$, are by definition the monodromy representations of the fundamental groups,  $\pi_1(\bar C \m A) \to \Aut(H^k(\bar F))$ and $\pi_1(\bar C \m A) \to \Aut(H^k(F))$, respectively. We are more specifically interested in the first cohomology group and its global invariant cycles, denoted by $H^1(\bar F)^\pi$ and $H^1(F)^\pi$, respectively.

\begin{definition}
We say that $f$ is primitive if its general fibre $F$ is connected.
\end{definition}
Note that $f$ is primitive if and only if some compact resolution $\bar f$ of $f$ is primitive. Actually this does not depend of the choice of compact resolution (as defined in \S \ref{intro}).

By the Stein factorisation, there exists a projective curve $\bar C'$ and regular maps $\bar f' : \bar X \to \bar C'$ and $\mu : \bar C' \to \bar C$ such that $\bar f = \mu \circ \bar f'$ and that $\bar f'$ is primitive.

\begin{proposition}\label{p:mono}
 Let $p\in \bar C$ and let $\bar f$ be primitive. Then the following sequence is exact:
\begin{equation} \label{eq:deligne}
 0 \to H^1(\bar {C}) \to H^1(\bar {X})\to H^1(\bar {F_p}).
\end{equation}
 Moreover, $\Mon^ 1 \bar f = 1$ if and only if 
 the last map is a surjection and $H^1(\bar F_p)\simeq H^1(\bar F_q)$  for all $q \in \bar C$. 
\end{proposition}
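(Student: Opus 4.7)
The plan is to combine the Leray spectral sequence of the proper map $\bar f : \bar X \to \bar C$ with Deligne's global and local invariant cycle theorems.

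For the exact sequence, I first use that primitivity of $\bar f$, together with Zariski's connectedness theorem (valid since $\bar X$ is smooth projective and $\bar f$ is proper), implies that every fibre of $\bar f$ is connected, so $\bar f_* \bC = \bC_{\bar C}$. The five-term Leray exact sequence then produces
\[
0 \to H^1(\bar C) \xrightarrow{\bar f^*} H^1(\bar X) \to H^0(\bar C, R^1 \bar f_* \bC).
\]
Composing the last arrow with the stalk map at $p$ and using the proper base change identification $(R^1\bar f_*\bC)_p = H^1(\bar F_p)$ gives the claimed sequence. The remaining injectivity of $H^0(\bar C, R^1\bar f_*\bC) \to H^1(\bar F_p)$ follows because $R^1\bar f_*\bC$ is constructible on $\bar C$ and locally constant on the complement $\bar C \setminus A$ of a finite set: a global section vanishing near $p$ restricts to $0$ on the connected curve $\bar C\setminus A$ and hence vanishes.

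The characterization of $\Mon^1 \bar f = 1$ then reduces to dimension counts via the exact sequence and Deligne's invariant cycle theorem. For the $(\Leftarrow)$ direction, the hypotheses force $\dim H^1(\bar X) - \dim H^1(\bar C) = \dim H^1(\bar F)$; on the other hand, applying the exact sequence at a generic $q \in \bar C \setminus A$ and Deligne gives $\dim H^1(\bar X) - \dim H^1(\bar C) = \dim H^1(\bar F)^\pi$, whence $H^1(\bar F)^\pi = H^1(\bar F)$, i.e. $\Mon^1 \bar f = 1$. For the $(\Rightarrow)$ direction, trivial monodromy combined with Deligne yields $\dim H^1(\bar X) = \dim H^1(\bar C) + \dim H^1(\bar F)$; for any $p$, triviality of the local monodromy $T_p$ together with the local invariant cycle theorem gives an isomorphism $H^1(\bar F_p) \simeq H^1(\bar F)^{T_p} = H^1(\bar F)$, and then the exact sequence forces $H^1(\bar X) \to H^1(\bar F_p)$ to be surjective.

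The main obstacle is the last step in the $(\Rightarrow)$ direction: showing that trivial local monodromy $T_p = 1$ forces the specialization map $H^1(\bar F_p) \to H^1(\bar F)^{T_p}$ to be an \emph{isomorphism}, not merely a surjection as guaranteed by the local invariant cycle theorem. A priori $H^1(\bar F_p)$ carries a mixed Hodge structure that may have a nonzero weight-$0$ part (e.g. from loops in the dual graph of a nodal fibre), which would destroy injectivity. Ruling this out requires the Clemens--Schmid exact sequence applied to a semistable model: when $T_p = 1$ the logarithm $N_p$ vanishes, and the sequence collapses to give the desired bijectivity. This is the technical heart of the argument, and it is where the smooth-projective hypothesis on $\bar X$ and Deligne's mixed Hodge theory are indispensable.
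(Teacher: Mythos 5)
Your skeleton (Leray plus Deligne's global and local invariant cycle theorems) is close in spirit to the paper's, and the dimension counts in the $(\Leftarrow)$ direction are fine; but there is a genuine gap, and it occurs twice. In the exactness argument, the five-term sequence over $\bar C$ does give $0\to H^1(\bar C)\to H^1(\bar X)\to H^0(\bar C,R^1\bar f_*\bC)$, but your justification of the injectivity of the evaluation map $H^0(\bar C,R^1\bar f_*\bC)\to H^1(\bar F_p)$ --- ``vanishes near $p$, hence on $\bar C\m A$, hence vanishes'' --- is not a formal consequence of constructibility: a constructible sheaf on a curve can have nonzero global sections supported on the finite set $A$ (compare $R^2\bar f_*\bC$, whose skyscraper part records the reducible fibres). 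A section vanishing on $\bar C\m A$ has germ at $a\in A$ lying in the kernel of the specialization map $H^1(\bar F_a)\to H^1(\bar F)^{T_a}$, so the step you treat as formal is exactly the injectivity of specialization on $H^1$ --- the very statement you yourself single out later as ``the technical heart'' in the $(\Rightarrow)$ direction and do not prove. Moreover, the proposed fix there is not immediate either: semistable reduction involves a base change and modifications which replace $\bar F_p$ by a different special fibre, so after Clemens--Schmid you would still have to compare $H^1(\bar F_p)$ with $H^1$ of the new special fibre (e.g.\ via connectedness of the fibres of the comparison map). As written, both halves of your proof rest on an unproven injectivity.

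The paper supplies precisely this missing ingredient by an elementary topological argument: it proves $b_1(\bar F)\ge b_1(\bar F_p)$ by considering the pair $(X_{\Delta^*},\bar F)$, using $H_*(X_{\Delta^*},\bar F)\simeq \tilde H_*(S^1\vee S(\bar F))$ and a short diagram chase to show $H_1(\bar F)\to H_1(\bar F_p)$ is onto; dually this is the injectivity (hence, combined with the surjectivity coming from the tube factorization of $H^1(\bar X)\to H^1(\bar F)$ through $H^1(U_p)\cong H^1(\bar F_p)$, the isomorphism) that your argument needs, without invoking Clemens--Schmid or a semistable model. For the exactness the paper also sidesteps your sheaf-theoretic step: it works with the general fibre, runs Leray over the smooth locus $B$ (Lemma \ref{l:leray}, where $H^2(B)=0$ kills the relevant differential), and identifies $\ker\bigl(H^1(\bar X)\to H^1(\bar F)\bigr)$ with $H^1(\bar C)$ by a weight argument: the kernel is pure of weight $1$ and lies in $H^1(B)$, whose weight-$1$ part is exactly the image of $H^1(\bar C)$ by Lemma \ref{l:open}. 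If you import either the $b_1$ inequality or this weight argument, your outline can be completed; without one of them, the specialization-injectivity step is a real hole.
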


For the proof, we need two lemmas.
\begin{lemma}\label{l:leray}
Let $f:E \to B$ be a primitive projective submersion, where $B$ is non-complete. 
Consider the Leray spectral sequence with $E_2^{pq}= H^p(B, R^q f_* (\bC_E))$ abutting to $H^{p+q}(E)$. 
If  $B$ and $F$ are complex curves, then we have an exact sequence:
 $$ 0 \to H^1(B) \to H^1(E)\to H^1(F) $$ 
\end{lemma}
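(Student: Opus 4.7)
The plan is to apply the five-term low-degree exact sequence of the Leray spectral sequence associated to $f : E \to B$, since all the pieces I need will fall out immediately once I identify the low direct images.

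First I would argue that because $f$ is a proper submersion between complex manifolds, Ehresmann's theorem makes $f$ a smooth fiber bundle, so each $R^q f_* \bC_E$ is a local system on $B$ with stalk $H^q(F)$. The primitivity assumption says the general fiber is connected; since a proper submersion has all fibers diffeomorphic, every fiber is connected, and hence $R^0 f_* \bC_E \cong \bC_B$. (The hypothesis that $F$ is a curve will only enter if one wants to push the sequence further to the right; it is not needed for the three-term statement.)

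Next I would write down the standard five-term exact sequence from $E_2^{p,q} = H^p(B, R^q f_* \bC_E) \Rightarrow H^{p+q}(E)$:
\[
0 \to H^1(B, f_* \bC_E) \to H^1(E) \to H^0(B, R^1 f_* \bC_E) \to H^2(B, f_* \bC_E) \to H^2(E).
\]
Substituting $f_* \bC_E = \bC_B$, the first term becomes $H^1(B)$ and the fourth becomes $H^2(B)$. In particular the piece I care about,
\[
0 \to H^1(B) \to H^1(E) \xrightarrow{\eta} H^0(B, R^1 f_* \bC_E),
\]
is already exact at $H^1(B)$ and at $H^1(E)$, regardless of non-completeness.

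To finish, I need to compare $\eta$ with the fiber-restriction map. The edge homomorphism $\eta$, followed by the stalk evaluation at any $p \in B$, is, by standard properties of the Leray spectral sequence, exactly the restriction $H^1(E) \to H^1(F_p) = H^1(F)$. So it suffices to show that the stalk evaluation $H^0(B, R^1 f_* \bC_E) \to H^1(F_p)$ is injective. But a global section of a local system on a connected space is determined by its value at any point (transport along paths recovers it), so a global section vanishing at $p$ vanishes identically; this gives injectivity. Composing with the edge-map sequence above yields
\[
0 \to H^1(B) \to H^1(E) \to H^1(F),
\]
which is the desired exactness.

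I do not expect a real obstacle here: the only point to verify carefully is the identification of the edge map with the topological restriction to a fiber, which is classical. As a side remark I would note that the non-completeness of $B$ gives $H^2(B) = 0$, and then the same argument refines to a short exact sequence $0 \to H^1(B) \to H^1(E) \to H^0(B, R^1 f_* \bC_E) \to 0$, identifying the image with the monodromy invariants $H^1(F)^\pi$ — but this sharpening is not part of the lemma as stated.
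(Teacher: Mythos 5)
Your proof is correct, but it takes a genuinely different and more elementary route than the paper's. The paper invokes two deep inputs: Deligne's theorem that the Leray spectral sequence of a projective submersion degenerates at $E_2$ (via \cite[Theorem 4.15]{Vo}), plus the vanishing $H^2(B)=0$ for a non-complete curve, and then Deligne's global invariant cycle theorem to identify $H^0(B,R^1f_*\bC_E)$ with the image of $H^1(E)\to H^1(F)$, whence the exactness. You instead use only the five-term low-degree exact sequence, which holds for any Leray spectral sequence with no degeneration needed, together with Ehresmann's theorem (so the $R^qf_*\bC_E$ are local systems and all fibres are connected, giving $R^0f_*\bC_E=\bC_B$), the classical identification of the edge homomorphism followed by stalk evaluation with the restriction $H^1(E)\to H^1(F_p)$, and the injectivity of evaluation of a global section of a local system at a point of the connected base. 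This buys elementariness and extra generality: as you note, the three-term exactness does not use that $B$ and $F$ are curves, nor that $B$ is non-complete, whereas the paper's argument is tailored to those hypotheses; on the other hand, the paper's route (and your side remark, where non-completeness and/or the invariant cycle theorem re-enter) also identifies the image of $H^1(E)\to H^1(F)$ with the full space of monodromy invariants, which is what Proposition \ref{p:mono} ultimately exploits. Two small points you should make explicit: connectedness of $B$ (true in the application, $B$ being Zariski-open in $\bar C$) is what justifies the injectivity of the stalk evaluation on $H^0(B,R^1f_*\bC_E)$, and properness of $f$ (it is projective) is what allows Ehresmann's theorem.
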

%\marginpar{\begin{scriptsize}check the degeneration          \end{scriptsize}} 
\begin{proof} The Leray spectral sequence degenerates  in $E_2$ by \cite[Theorem 4.15]{Vo}.
The only possibly nontrivial differential in the spectral sequence is $H^0(B, 
R^1 f_* (\bC_E))\to H^2(B, H^0(F))$. Since $H^2(B)=0$ for a noncomplete curve, this differential must be $0$.
The 
quotient of $H^1(E)$ by  $H^1(B, R^0 f_* (\bC_E))$  is $H^0(B, R^1 f_* (\bC_E))$, which is precisely the invariant cycles. Note that  $H^1(B, R^0 f_* (\bC_E))$ is isomorphic to $H^1(B)$ since $R^0 f_* (\bC_E) = \bC_B$ by the primitivity.
On the other hand, the invariant cycle theorem 
of Deligne states that the invariant cycles are precisely the image of 
$H^1(E)\to H^1(F)$. Thus our sequence is exact.
\end{proof}
We shall use the following Hodge theory fact, see e.g. \cite[4.3.3]{Vo} for a proof:

\begin{lemma} \label{l:open}
 Let $Y$ be a connected nonsingular projective variety and let $j:U\hookrightarrow Y$ denote the inclusion of some Zariski open subset $U$.  
Then, for any $k\ge 0$, the weight $k$ filtration of the mixed Hodge structure 
on  $H^k(U)$ is precisely the image of the morphism induced by the restriction $j^*: H^k(Y) \to H^k(U)$.
\fin
\end{lemma}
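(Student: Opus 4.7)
The plan is to follow Deligne's construction of the mixed Hodge structure on $H^k(U)$ via the logarithmic de Rham complex: first reduce to the case where $D:=Y\m U$ is a simple normal crossings (snc) divisor, then identify the weight $k$ piece of $H^k(U)$ with the image of $j^*$ by inspection of the weight filtration on $\Omega^\bullet_Y(\log D)$.

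\emph{Reduction to the snc case.} By Hironaka one picks a sequence of blowups $\pi : Y' \to Y$ with centers inside $Y\m U$, yielding a smooth projective $Y'$ that is an isomorphism over $U$ and for which $D':=Y'\m U$ is snc. The restriction $j^*$ factors as $H^k(Y) \xrightarrow{\pi^*} H^k(Y') \to H^k(U)$. Iterating the standard blowup formula, one has a decomposition $H^k(Y') = \pi^* H^k(Y) \oplus K$ in which the second summand $K$ is spanned by classes supported on the exceptional divisor $E \subset Y'\m U$. Because $U \cap E = \emptyset$, these classes restrict trivially to $U$, giving $\im(j^*) = \im(H^k(Y') \to H^k(U))$. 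Thus one may replace $Y$ by $Y'$ and assume $D$ is snc.

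\emph{The snc case via the log complex.} With $D$ snc, the global log de Rham theorem provides a quasi-isomorphism $\mathbb{R} j_* \bC_U \simeq \Omega^\bullet_Y(\log D)$, so $H^k(U) \simeq \bH^k(Y, \Omega^\bullet_Y(\log D))$. Deligne defines the weight filtration on $H^k(U)$ via the increasing filtration $W_\ell \Omega^\bullet_Y(\log D)$ by subcomplexes of forms with at most $\ell$ logarithmic poles along $D$, normalised so that $W_{k+\ell} H^k(U)$ is the image of $\bH^k(Y, W_\ell \Omega^\bullet_Y(\log D))$ in $H^k(U)$. Since $W_0 \Omega^\bullet_Y(\log D) = \Omega^\bullet_Y$, taking $\ell = 0$ gives
\[ W_k H^k(U) = \im\bigl(\bH^k(Y, \Omega^\bullet_Y) \to \bH^k(Y, \Omega^\bullet_Y(\log D))\bigr) = \im\bigl(j^*: H^k(Y) \to H^k(U)\bigr), \]
which is the desired equality.

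\emph{Main obstacle.} The reduction step is routine once Hironaka and the blowup formula are granted. The substantive input is the identification of the weight filtration on $H^k(U)$ with the logarithmic pole filtration on $\Omega^\bullet_Y(\log D)$ and the $E_1$-degeneration of its Hodge--de Rham spectral sequence, which are the core contents of Deligne's Hodge II and are written out in \cite{Vo}. In the present paper $D$ is snc by the standing hypothesis of Section~\ref{intro}, so only the second step is genuinely needed.
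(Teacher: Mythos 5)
Your argument is correct and is exactly the standard proof that the paper delegates to the citation \cite[4.3.3]{Vo}: the identification $W_kH^k(U)=\im\bigl(\bH^k(Y,W_0\Omega^\bullet_Y(\log D))\to H^k(U)\bigr)$ with $W_0\Omega^\bullet_Y(\log D)=\Omega^\bullet_Y$, after the routine Hironaka reduction (which, as you note, is not even needed here since $D$ is assumed snc throughout the paper). No gaps.
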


\subsection*{Proof of Proposition \ref{p:mono}}
 We first prove the second statement. If $\bar {F}$ denotes
the general fibre of $\bar f$ then, by Deligne's invariant cycle theorem \cite{De}, the global invariant cycles coincide with 
the image of $H^1(\bar {X}) \to H^1(\bar {F})$. Then $\Mon^ 1 \bar f = 1$ if and only if this map is surjective. For proving the surjectivity for a special fibre $\bar {F_p}$ we use the following construction. Let $U$ be a tubular neighbourhood of $\bar {F_p}$ which is
invariant under $\overline {f}$. Let $\bar {F}\subset U$ be a
general fibre contained in $U$. Then $H^1(U)\cong H^1(\bar {F_p})$
is an isomorphism and  $H^1(\bar {X})\to H^1(\bar {F})$ factors
through  $H^1(U)$. Hence $H^1(U)\cong H^1(\bar {F_p})\to
H^1(\bar {F})$ is surjective. Since $b_1(\bar F) \geq b_1(\bar F_p)$,
this must be an isomorphism and the proof of the surjection in (\ref{eq:deligne}) is done. For the sake of completeness, let us provide here a brief proof of the inequality $b_1(\bar F) \geq b_1(\bar F_p)$ for any special fibre $\bar F_p$ of $\bar f$. We consider the pair $(X_{\Delta^ *}, \bar F)$ where $\Delta$ is a small disk centered at the image point $\bar f (\bar F_p)$ and $\Delta^ *$ is the punctured disk. We have $H_*(X_{\Delta^ *}, \bar F) \simeq \tilde H_* ( S^ 1 \vee S(\bar F))$ where $S(\bar F)$ denotes the suspension over the general fibre  $\bar F$. It follows that $H_1(X_{\Delta^ *}, \bar F) \simeq \bC$. The exact sequence of the pair yields:
\begin{equation}
 \begin{array}{cccccc}
 H_1(\bar F) & \stackrel{i}{\rightarrow} &  H_1(X_{\Delta^ *}) & \stackrel{k}{\rightarrow} & \bC & \rightarrow 0 \\
j \downarrow & \ & \downarrow \pi & \ & \ &  \\
H_1(\bar F_p) & \simeq & H_1(X_{\Delta}) & \ & \ &
\end{array}
\end{equation}
 where $j$ is induced by the restriction to $\bar F$ of the topological contraction of a tube to the special fibre $\bar F_p$. The surjectivity of the natural map $\pi$ holds already at the level of fundamental groups since $X_{\Delta^ *}$ is the complement of a proper complex analytic subset. Since there exists a lift by $k$ of $[S^1]$ which is sent to $0$ by $\pi$, it follows that the restriction $\pi_{| \ker k}$ is a surjection too. By the exactness, $\pi \circ i$ is surjective and therefore $j$ is surjective too. Hence $b_1(\bar F) \geq b_1(\bar F_p)$.
\smallskip

To prove the exactness of the sequence (\ref{eq:deligne}), let us show that the kernel of $\gamma : H^1(\bar {X})\to H^1(\bar {F})$
is $H^1(\bar {C})$.  First we observe that $\ker \gamma$ has weight 1 since $H^1(\bar X)$ has pure Hodge structure. Let $B\subset \bar {C}$ be the open subset over which $\bar {f}$ is submersive. Let $\bar {X}_B:=\bar {f}^{-1}(B)$.
We have the following commutative diagram, where the botom sequence is exact by Lemma \ref{l:leray}:
\begin{equation}
 \begin{array}{cccccccc}
 0 & \to & H^1(\bar B) & \rightarrow &  H^1(\bar X) & \stackrel{\gamma}{\rightarrow} & H^1(\bar F) \\  \ & \ & 
 \downarrow & \ & \downarrow & \ & \downarrow =  \\
0 & \to & H^1(B) & \rightarrow & H^1(\bar X_B) & \rightarrow & H^1(\bar F) 
\end{array}
\end{equation}

 The restriction maps $H^1(\bar B) \to H^1( B)$ and $H^1(\bar X) \to H^1(\bar {X}_B)$ are injective since the inclusion of a Zariski open subset in a smooth variety induces a surjection of their fundamental groups.
 This implies that $H^1(\bar B) \to H^1(\bar {X})$ is injective and that we have the equality 
 $\ker \gamma = H^1(\bar {X}) \cap H^1(B)$, 
where the intersection is taken in $H^1(\bar {X}_B)$, i.e. $H^1(B)$ and $H^1(\bar {X})$ are identified with their isomorphic images in $H^1(\bar {X}_B)$. 
Since by Lemma \ref{l:open} the weight 1 part of $H^1(B)$ is precisely the image of $H^1(\bar {B})$ and since $\ker \gamma$ is pure of weight 1, we conclude that $H^1(\bar {X}) \cap H^1(B)= H^1(\bar B)$. This ends our proof since $\bar B = \bar C$ by definition.

The following useful result is a simple consequence of the Stein factorisation.
\begin{lemma}\label{l:connected}
Let $\bar X \to \bar C$ be a regular map between smooth projective varieties, where $\bar X$ is irreducible and $\bar C$ is a curve. Let $D_0\subset \bar X$ be a horizontal curve. Then, for every $p \in \bar C$, $D_0$ meets every connected component of the fibre $\bar F_p$.
\fin
\end{lemma}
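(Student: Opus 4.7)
The plan is to apply Stein factorisation and then identify the connected components of $\bar F_p$ with the fibre of the finite part. Write $\bar f = \mu \circ g$ where $g : \bar X \to \bar C'$ has connected fibres and $\mu : \bar C' \to \bar C$ is finite; since $\bar X$ is irreducible, so is $\bar C'$. For each $p \in \bar C$ the fibre decomposes as
\[
\bar F_p \;=\; \bar f^{-1}(p) \;=\; \bigsqcup_{p' \in \mu^{-1}(p)} g^{-1}(p'),
\]
and by the Stein property each $g^{-1}(p')$ is connected. So the connected components of $\bar F_p$ are exactly the sets $g^{-1}(p')$ as $p'$ ranges over the finite set $\mu^{-1}(p)$.

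It therefore suffices to show that $g_{|D_0} : D_0 \to \bar C'$ is surjective, for then every $p' \in \mu^{-1}(p)$ has a preimage in $D_0$, giving a point of $D_0 \cap g^{-1}(p')$. First I would note that $g(D_0)$ is closed in $\bar C'$ because $D_0$ is projective (it is a closed curve in a projective variety), and connected because $D_0$ is irreducible. Then, since $D_0$ is horizontal, $\bar f_{|D_0} = \mu \circ g_{|D_0}$ is surjective onto $\bar C$; as $\mu$ is finite, this forces $g(D_0)$ to be $1$-dimensional. Thus $g(D_0)$ is a closed, connected, $1$-dimensional subset of the irreducible curve $\bar C'$, hence equals $\bar C'$.

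The only mild subtlety is checking that $\bar C'$ is indeed irreducible so that the above ``closed plus $1$-dimensional implies everything'' step is legitimate; this follows from the construction $\bar C' = \mathrm{Spec}(\bar f_* \mathcal{O}_{\bar X})$ together with the irreducibility (equivalently integrality, since smooth) of $\bar X$. Everything else is essentially formal once Stein factorisation is in hand, which is presumably why the authors label this as a ``simple consequence''.
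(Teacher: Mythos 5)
Your argument is correct and is exactly the route the paper intends: the lemma is stated there without proof as "a simple consequence of the Stein factorisation," and your write-up (connected components of $\bar F_p$ are the Stein fibres $g^{-1}(p')$, plus surjectivity of $g_{|D_0}$ because its image is a closed, positive-dimensional subset of the irreducible curve $\bar C'$) simply fills in those details. Nothing is missing; the one implicit point worth keeping in mind is that horizontality of $D_0$ guarantees $\bar f$, hence $g$, is surjective, so every $g^{-1}(p')$ is nonempty, which you in effect use when identifying the components.
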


\begin{definition}\label{d:simple}
 We say that $f$ is \textit{simple} if $\deg \bar f_{|D_i} = 1$ for all horizontal $D_i$. This does not depend on the choice of the resolution $\bar f$.
\end{definition}

We draw some consequences of the triviality of the monodromy groups $\Mon^k f$ and $\Mon^k \bar f$, respectively. In the particular case of $f: \bC^2 \to \bC$, a version of the following Propositions \ref{p:trivmon} and \ref{p:equiv} has been proved  in \cite{Di}, \cite{NN1}.
The notation $\Mon$ without upper index means all $k=0, 1, 2$.
\begin{proposition} \label{p:trivmon}
Let  $\Mon f= 1$. Then:
\begin{enumerate}
 \item  $f$ is primitive.
 \item  $\Mon \bar f= 1$ 
 \item $f$ is simple.
\end{enumerate} 

\end{proposition}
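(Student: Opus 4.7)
My plan is to establish (a) first, then use it for (b), and use similar cover-monodromy ideas for (c).

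\emph{Part (a).} I would apply Stein factorisation to write $f = g \circ \phi$, where $\phi : X \to C'$ has connected fibres and $g : C' \to C$ is finite. Since $X$ is connected and $\phi$ is surjective, $C'$ is connected. The branch locus of $g$ lies in $A$, so over $C \m A$ the map $g$ restricts to a finite \'etale cover of degree $n := \deg g$ whose total space is Zariski-open in the irreducible smooth curve $C'$, hence connected. The monodromy of a connected finite \'etale cover is transitive on its fibres, so $\pi_1(C \m A)$ permutes transitively the $n$ connected components of $F$; but $\Mon^0 f = 1$ forces this permutation to be trivial, so $n = 1$ and $f$ is primitive.

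\emph{Part (b).} By (a) and the observation in \S\ref{s:monodr} that $f$ is primitive iff $\bar f$ is, one has $\Mon^0 \bar f = 1$; and $\Mon^2 \bar f = 1$ is automatic because $H^2(\bar F) \cong \bC$ is the fundamental class of the smooth projective curve $\bar F$, preserved by any orientation-preserving diffeomorphism. For $\Mon^1 \bar f$, I would use the long exact sequence of the pair $(\bar F, F)$: excision at the $\delta$ punctures $\bar F \m F$ gives $H^1(\bar F, F) = 0$ and hence an injection $H^1(\bar F) \hookrightarrow H^1(F)$. By naturality this injection is $\pi_1(C \m A)$-equivariant, since the monodromy of $\bar f$ along a loop in $C \m A \subset \bar C \m A$ is a diffeomorphism of the pair $(\bar F, F)$ whose restriction to $F$ is the corresponding monodromy of $f$. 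Triviality of $\Mon^1 f$ thus makes $\pi_1(C \m A)$ act trivially on $H^1(\bar F)$, and since the inclusion $C \m A \hookrightarrow \bar C \m A$ removes only finitely many points from a smooth surface, the induced map $\pi_1(C \m A) \twoheadrightarrow \pi_1(\bar C \m A)$ is surjective, giving $\Mon^1 \bar f = 1$.

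\emph{Part (c).} Write $\bar F \m F = \{p_1, \dots, p_\delta\}$. Each loop $\gamma \in \pi_1(C \m A)$ induces via the $\bar f$-monodromy a permutation $\sigma_\gamma \in S_\delta$ of these punctures, preserving the partition by horizontal components. In $H_1(F)$ the classes $[\ell_j]$ of small positively oriented loops around the $p_j$ satisfy the unique relation $\sum_k [\ell_k] = 0$ and are otherwise independent; the monodromy sends $[\ell_j]$ to $[\ell_{\sigma_\gamma(j)}]$, so $\Mon^1 f = 1$ gives $[\ell_{\sigma_\gamma(j)}] = [\ell_j]$ in $H_1(F)$, which by uniqueness of the relation forces $\sigma_\gamma(j) = j$ for every $j$ and every $\gamma$. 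Now fix a horizontal $D_i$: each of the $d_i$ points in $\bar F \cap D_i$ is individually fixed by every $\sigma_\gamma$. On the other hand, the restriction $D_i \cap \bar f^{-1}(C \m A) \to C \m A$ is finite \'etale of degree $d_i$ (branch values of $\bar f|_{D_i}$ lie in $A$ by the stratified-submersion property over $\bar C \m A$) and connected (Zariski-open in the irreducible $D_i$), so its monodromy acts transitively on the fibre $\bar F \cap D_i$. Transitivity together with pointwise-triviality forces $d_i = 1$, which is the definition of $f$ simple.

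The main obstacle is the permutation-linearisation step in (c): one must verify both that $\sum_k [\ell_k] = 0$ is the only $H_1(F)$-relation among the $[\ell_k]$ (so that $[\ell_i] = [\ell_j]$ implies $i = j$) and that the branch locus of each $\bar f|_{D_i}$ sits inside $A$, in order for the connected-\'etale transitivity argument to apply over all of $C \m A$.
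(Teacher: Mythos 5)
Your proposal is correct and follows essentially the same route as the paper: Stein factorisation plus transitivity of covering monodromy for (a), the injection $H^1(\bar F)\hookrightarrow H^1(F)$ coming from the pair $(\bar F,F)$ for (b), and the puncture classes with the unique relation $\sum_j [\ell_j]=0$ combined with transitivity of the monodromy of $\bar f_{|D_i}$ for (c). The only differences are matters of explicitness (you spell out $\Mon^0$, $\Mon^2$, the surjection $\pi_1(C\m A)\to\pi_1(\bar C\m A)$, and the transitivity step that the paper leaves implicit), not of substance.
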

\begin{proof}

(a). If $f$ was not primitive then the \textit{Stein factorisation}  allows one to write $f = g\circ h$, where $h : X \to C'$ and $g : C' \to C$ is a  covering of  degree $d>1$. 
Since $C'$ is connected, $\Mon^0 f$ identifies to the monodromy group of the covering $g$. The later is a subgroup of the permutation group of $d$ points. This permutation subgroup is not trivial if $d>1$. This actually shows that (a) is equivalent to $\Mon^0 f= 1$.

\smallskip

\noindent (b). We have the short exact sequence induced by the inclusion $F\stackrel{i}{\to} \bar F$:
\begin{equation} \label{eq:exact1}
 0  \to H^1(\bar F) \stackrel{i^*}{\to} H^1(F) \to \coker i^* \to 0
\end{equation}

\noindent
where $\coker i^* \simeq \tilde H_0(\bar F\m F)$. The later is of rank $\delta -1$, where $\delta$ is the number of points at infinity $\bar F\m F$. Moreover the geometric monodromy of $\bar f$ described in \S \ref{intro} acts on $F$. This sequence shows that the triviality of the monodromy group of $f$ implies the triviality of the monodromy group of $\bar f$.  

\smallskip

\noindent (c). Let us assume that there is at least one horizontal component, otherwise we have nothing to prove.
Let $\{ p_1, \ldots , p_\delta \} = D \cap \bar F$ and let $\Delta_j$ denote some small closed disk around the point $p_j$. The complex normal bundle of some horizontal divisor $D_0$ within $X$ has a well defined orientation given by the complex orientation of $X$ and that of $D_0$. The general fibres of $\bar f$ are transversal to $D_0$ and their orientation coincides with the one of the normal bundle. This implies in particular that all the small circles $\partial \Delta_j$ have a canonical orientation. 

Let us consider the homology exact sequence of the pair of general fibres $(\bar F, F)$:
\[ 0\to H_2(\bar F) \to H_2(\bar F, F) \stackrel{\nu}{\to} H_1(F) \to H_1(\bar F) \to 0
\]
where one may identify $H_2(\bar F, F)$ with $\oplus_{j=1}^\delta H_1(\partial \Delta_j)$ by excision and boundary isomorphism, and where $H_2(\bar F) = \bZ$.
 Denote by $e_j := \nu([\Delta_j, \partial \Delta_j])$ the generators of $\im \nu \subset H_1(F)$. 
We may assume that all the points $p_j$ are in some disc $\Delta \subset \bar F$ containing also the small disks $\Delta_j$. Then the unique relation among the $e_j$'s in $H_1(F)$ is precisely $e_1 + \cdots + e_\delta = \nu([\Delta, \partial \Delta]) =0$.

If $f$ is not simple then there exists some horizontal component $D_0$ of the divisor $D = \bar X \m X$ such that $\deg (\bar f_{|D_0})\ge 2$.
The geometric monodromy along any loop in $\bar C$ which avoids bifurcation values acts on the generators $e_j$ corresponding to the points $\bar F \cap D_0$
by precisely an orientation preserving permutation of these $e_j$'s. Say we have $e_j \mapsto e_k$ for some $k\not= j$. But $e_j = e_k$ cannot be a relation since this would be different from the unique one presented above. It therefore follows that the monodromy is not the identity, which is  a contradiction.
\end{proof}

We have the following partial reciprocal of Proposition \ref{p:trivmon}.

\begin{proposition} \label{p:equiv}
  Let $f$ be simple. If $\Mon \bar f =1$ then $\Mon f$ is unipotent. 
\end{proposition}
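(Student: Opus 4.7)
The plan is to verify unipotence degree-by-degree, and the only non-trivial case will be $\Mon^1 f$. For $k=2$ the general fibre $F$ is a smooth non-compact curve, so $H^2(F)=0$ and $\Mon^2 f = 1$ automatically. For $k=0$, the triviality of $\Mon^0\bar f$ forces $\bar F$ to be connected; assuming at least one horizontal component exists, simplicity combined with Lemma \ref{l:connected} forces every horizontal component to meet every connected component of $\bar F$, but a degree one section can meet $\bar F$ only once, so $\bar F$ is connected; hence $F$ is connected (it is obtained by removing $\delta$ points from the connected curve $\bar F$), and $\Mon^0 f = 1$. The case with no horizontal component is immediate since then $F=\bar F$.

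The core of the argument concerns $k=1$. I would start from the monodromy-equivariant short exact sequence already used in the proof of Proposition \ref{p:trivmon}(b):
\begin{equation*}
0 \to H^1(\bar F) \xrightarrow{i^*} H^1(F) \to \coker i^* \to 0,
\end{equation*}
where $\coker i^* \simeq \tilde H_0(\bar F \m F)$ has rank $\delta - 1$ and where the geometric monodromy of $\bar f$ preserves $F\subset \bar F$ and hence acts on the whole sequence. The key observation comes from simplicity: the condition $\deg \bar f_{|D_i}=1$ means that each horizontal component $D_i$ is a section of $\bar f$ over an open subset of $\bar C$, so the $\delta$ points of $\bar F \m F$ are in canonical bijection with the horizontal components of $D$, and this bijection is preserved by any loop in $\bar C \m A$. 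Consequently the geometric monodromy fixes each point of $\bar F \m F$ individually, and therefore acts trivially on $\tilde H_0(\bar F \m F) \simeq \coker i^*$.

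Combining this with the hypothesis $\Mon^1 \bar f = 1$, which says that the monodromy acts trivially on $H^1(\bar F)$, I would then let $M$ denote the monodromy operator on $H^1(F)$ induced by any loop. The exact sequence shows that $M-\Id$ lands in the image of $i^*$ (since $M$ is the identity on the quotient), and that $i^*(M-\Id)i^* = 0$ (since $M$ is the identity on $H^1(\bar F)$). Hence $(M-\Id)^2 = 0$, which is exactly the statement that $M$ is unipotent. Thus $\Mon^1 f$ is unipotent, completing the three cases.

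The main potential obstacle is making sure the sequence really is monodromy-equivariant and that the ``fixing each puncture'' claim is correctly argued; both follow from the fact that the geometric monodromy of $\bar f$ is a homeomorphism of $\bar F$ that preserves the set $\bar F \cap D$ and, by simplicity, preserves each singleton $\bar F \cap D_i$. No deeper Hodge-theoretic input is needed beyond what has already been established.
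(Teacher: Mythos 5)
Your argument is correct and follows essentially the same route as the paper: triviality of $\Mon^0\bar f$ gives connectedness of $\bar F$ and hence of $F$, the proper case and $H^2(F)=0$ dispose of $k=0,2$, and for $k=1$ one uses the sequence \eqref{eq:exact1} with the monodromy acting as the identity on $H^1(\bar F)$ and on $\coker i^*\simeq \tilde H_0(\bar F\m F)$ to get $(M-\Id)^2=0$. Your only addition is to spell out why simplicity forces the trivial action on $\coker i^*$ (each horizontal component meets $\bar F$ in a single, hence fixed, point), a step the paper leaves implicit.
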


\begin{proof}  
If $\Mon^0 \bar f = 1$ then $\bar F$ is connected (since $X$ is smooth and irreducible), hence $F$ is connected, so $\Mon^0 f = 1$. We also have that the general fibre $\bar F$ is smooth and irreducible. Then either $F$ is an open set and then $H^2(F) = 0$, or $F= \bar F$ and then $\Mon f = \Mon \bar f =1$. 
In the exact sequence \eqref{eq:exact1}
 the monodromy acts as the identity on both sides. This proves the unipotency claim for $\Mon^1 f$.
\end{proof}

\begin{question}\label{c:conj}
Can one find an example of $f$  simple and with $\Mon \bar f =1$ but such that  $\Mon f \not= 1$?
\end{question}

%%%%%%%%%%%%%%%%%%%%%%%%%%%%%%%%%%%%%%%%%%%%%%%%%%%%%%%%%%%%%%%%%%%%%%%%%%55
%%%%%%%%%%%%%%%%%%%%%%%%%%%%%%%%%%%%%%%%%%%%%%%%%%%%%%%%%%%%%%%%%%%%%
%%%%%%%%%%%%%%%%%%%%%%%%%%%%%%%%%%%%%%%%%%%%%%%%%%%%%

\section{Topology of regular maps from $H^i$-pure surfaces}\label{pure}

%In this section we study the cases when a single loop can determine the triviality of the %monodromy. To achieve this, one clearly needs conditions on the surface X. 

\begin{definition}\label{d:pure}
A quasi-projective surface $X$ is called {\em $H^i$-pure} for $i \ge 1$ if the mixed Hodge structure on the $i$-th cohomology $H^i(X)$ is  pure of weight $i$. 
\end{definition}

 These conditions are extending the case $X= \bC^2$. For instance $\bQ$-homology planes are $H^i$-pure for any $i\ge 1$ since they have trivial cohomology. 
 
 Throughout this paper we shall impose the following specific condition: ``$X$ is  $H^2$-pure and $H^3$-pure''. 
 
\begin{remark}\label{r:pure}
 Let us consider the particular case $X\subset \bP^2$. Then we have the following characterization:
 
 $X$ is $H^2$-pure and $H^3$-pure if and only if $Y:= \bP^2 \m X$ is either a divisor of which the dual graph is a connected tree of topological $\bP^1$'s, or $Y$ is one point.
 
 Actually the $H^3$-purity is the easy part and just means the connectivity of $Y$, whereas the $H^2$-purity means all the rest, and thus it is more subtle.
 
 In particular  $X:= \bC \times \bC^{*k}$ (where $\bC^{*k}$ denotes the complex line with $k$ punctures) verifies the hypotheses for any $k \ge 0$, which already provides an infinite family of surfaces in $\bP^2$.
\end{remark}

Let us now interpret the $H^i$-purity in terms of the normal crossing divisor $D$ obtained after resolution.

\begin{lemma} \label{l:h2}
 Let X be a smooth quasi projective surface. Let $\bar X$ be a connected smooth
compactification of $X$ with $D := \bar X \m X$  a divisor with simple normal crossing. Then:
\begin{enumerate}
 \item   $X$ is $H^3$-pure if and only if the restriction map $H^0(\bar X) \to H^0(D)$ is surjective, i.e., if and only if $D$ is connected.
\item   $X$ is $H^2$-pure if and only if $H^1(\bar X) \to H^1(D)$ is surjective.
 \item  $X$ is $H^1$-pure if and only if $H^2(\bar X) \to H^2(D)$ is surjective, i.e., if and only if the components of $D$  are linearly 
 independent in $H_2(\bar X)$.
\end{enumerate}
\end{lemma}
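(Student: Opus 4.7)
The plan is to combine the long exact sequence of mixed Hodge structures for the pair $(\bar X, D)$,
$$
\cdots \to H^k_c(X) \to H^k(\bar X) \to H^k(D) \to H^{k+1}_c(X) \to \cdots,
$$
where I use the identification $H^k(\bar X, D)\cong H^k_c(X)$, with Poincar\'e duality on the smooth complex surface $X$, namely $H^k(X)\cong H^{4-k}_c(X)^\vee(-2)$. Under this duality, $H^i$-purity of $X$ translates into $H^{4-i}_c(X)$ being pure of weight $4-i$. Since $\bar X$ is smooth projective, each $H^k(\bar X)$ is already pure of weight $k$, so every case of the lemma will reduce to a weight comparison in a short segment of the LES.

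Before that I would pin down the weights on $H^k(D)$ for $k=0,1,2$ using the cohomological descent spectral sequence with $E_1^{p,q}=H^q(D^{[p]})$, where $D^{[p]}$ is the disjoint union of $p$-fold transverse intersections of distinct components of $D$ (so $D^{[p]}=\emptyset$ for $p\geq 2$ on a surface). This gives $H^0(D)$ pure of weight $0$, $H^2(D)\cong \bigoplus_i H^2(D_i)$ pure of weight $2$, and a short exact sequence
$$
0 \to H^1(\Gamma) \to H^1(D) \to \bigoplus_i H^1(D_i) \to 0,
$$
where $\Gamma$ is the dual graph of $D$, with weight $0$ on the left and weight $1$ on the right. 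The uniform engine is then the standard fact that a morphism of MHS between subquotients of distinct pure weights must vanish.

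For (a), the segment $0\to H^0(\bar X)\to H^0(D)\to H^1_c(X)\to H^1(\bar X)$ together with the equivalence $H^3$-purity $\Leftrightarrow$ $H^1_c(X)$ pure of weight $1$ forces the image of the weight-$0$ space $H^0(D)$ in $H^1_c(X)$ to vanish, i.e.\ $H^0(\bar X)\to H^0(D)$ surjective, which means $D$ connected; the converse is immediate from the same segment. For (b), the segment $H^1(\bar X)\to H^1(D)\to H^2_c(X)\to H^2(\bar X)$ combined with $H^2$-purity $\Leftrightarrow$ $H^2_c(X)$ pure of weight $2$ forces the kernel of the right map, which is a quotient of $H^1(D)$ of weights $\leq 1$, to vanish, giving the surjection $H^1(\bar X)\to H^1(D)$; the converse is again immediate. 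For (c), the segment $H^2(\bar X)\to H^2(D)\to H^3_c(X)\to H^3(\bar X)$ combined with $H^1$-purity $\Leftrightarrow$ $H^3_c(X)$ pure of weight $3$ forces the surjection $H^2(\bar X)\to H^2(D)$ by exactly the same weight argument.

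The one remaining step is the geometric rephrasing in (c). Under $H^2(D)\cong \bigoplus_i H^2(D_i)\cong \bigoplus_i \bC$, the restriction map $H^2(\bar X)\to H^2(D)$ becomes $\alpha\mapsto (\langle \alpha,[D_i]\rangle)_i$, and by Poincar\'e duality on the smooth projective $\bar X$ this is the transpose of $\bigoplus_i \bC\to H_2(\bar X)$, $e_i\mapsto [D_i]$, so its surjectivity is equivalent to the linear independence of the classes $[D_i]$ in $H_2(\bar X)$. I expect this transposition, together with the weight computation on $H^1(D)$ and $H^2(D)$, to be the only place where real care is needed; beyond that, the proof is a clean instance of weight filtering in the long exact sequence of the pair.
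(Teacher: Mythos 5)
Your proposal is correct, and it shares the paper's skeleton: both arguments run on the long exact sequence of the pair $(\bar X,D)$ together with a duality identifying the relative groups with cohomology of $X$ (your $H^k_c(X)\cong H^k(\bar X,D)$ is exactly the paper's Lefschetz duality step). Where you genuinely diverge is in how purity is detected. The paper converts the question, via Lefschetz duality and the universal coefficient theorem (purely topological steps over $\bC$), into the surjectivity of the restriction $H^{3-i}(\bar X)\to H^{3-i}(X)$, and then the only Hodge-theoretic input is Lemma \ref{l:open} (Deligne: the weight-$k$ part of $H^k$ of a smooth open variety is the image of restriction from a smooth compactification). You instead stay inside the category of mixed Hodge structures: you use that Poincar\'e duality $H^k(X)\cong H^{4-k}_c(X)^\vee(-2)$ and the compactly supported sequence of the pair are morphisms of MHS, pin down the weights of $H^*(D)$ by the descent/Mayer--Vietoris spectral sequence, and conclude by strictness (a map from weights $\le m$ to a structure pure of weight $>m$ vanishes, and a sub-MHS of a pure structure is pure). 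This buys a uniform weight argument that never cites Lemma \ref{l:open} and makes the combinatorics of $D$ explicit (your short exact sequence for $H^1(D)$ is essentially the content of the paper's Corollary \ref{c:tree}), at the cost of invoking the heavier standard facts that duality and the $H_c$-sequence are MHS-compatible; note that for (b) you only need that $H^1(D)$ has weights $\le 1$, which holds for any variety, so the dual-graph computation, while correct, is not strictly necessary. Your reformulation of (c) as the transpose of $\bigoplus_i\bC\to H_2(\bar X)$, $e_i\mapsto[D_i]$, matches the paper's use of the universal coefficient theorem, so no gap there either.
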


\begin{proof}
Consider the following long exact sequence of the pair $(\bar X,D)$
$$   \cdots \to H^i(\bar X) \to  H^i(D) \to H^{i+1}(\bar X,D)  \to H^{i+1}(\bar X) \to \cdots
$$
for $i=0, 1, 2$.
We have the following  sequence of equivalences: $H^i(\bar X)  \to  H^i(D)$  is surjective $\Leftrightarrow$  $H^{i+1}(\bar X, D) \to  H^{i+1}(\bar X)$ is injective $\Leftrightarrow$  the Lefschetz dual map induced
by inclusion  $H_{4-i-1}( X) \to  H_{4-i-1}(\bar X)$ is injective $\Leftrightarrow$ $H^{4-i-1}(\bar X) \to  H^{4-i-1}(X)$ is surjective  (by the universal coefficient theorem on cohomology with complex coefficients)
 $\Leftrightarrow$ $X$ is $H^{4-i-1}$-pure, by Lemma \ref{l:open}.
%, which tells that moreover $H^{4-i-1}(\bar X) \to  H^{4-i-1}(X)$ is an isomorphism. 
%Then all the morphisms in the above equivalences are isomorphisms too.

To finish the proof of (c), let us note that the surjectivity of  $H^2(\bar X)\to H^2(D)$  is equivalent to the injectivity of the dual map $H_2(D)\to H_2(\bar X)$ induced by the Universal Coefficient Theorem over a field, which just means the independence 
of the irreducible components of $D$ in the 2-homology  of $\bar X$.
\end{proof}

\begin{example}\label{e:pure}
If  $\bar X$ is a smooth projective surface and $D$ a connected divisor whose components form a tree of rational curves, then $X := \bar X \m D$ is a $H^2$- and $H^3$-pure surface.
\end{example}

The above result and its proof can  easily be extended to any dimension $n$, as follows:
\begin{proposition}\label{p:pure}
\textit{Assume $X$ is a smooth connected complex variety and $\bar X$  is a completion
to a smooth projective (complete) variety with the complement  $D:= \bar X \m  X$.
 Then
 $X$ is $H^{2n-i-1}$-pure if and only if $H^i(\bar X)  \to  H^i(D)$ is surjective.}
 \fin
\end{proposition}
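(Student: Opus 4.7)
My plan is to imitate the four-term chain of equivalences used in the proof of Lemma~\ref{l:h2}, replacing the specific dimensional shift $4-i-1$ by $2n-i-1$ and verifying that each step still goes through without using anything special to the surface case. The statement is really just the Lefschetz-dual reformulation of the open-weight Lemma~\ref{l:open}, so the strategy is essentially to translate along this duality.

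First I would write down the long exact cohomology sequence of the pair $(\bar X, D)$,
\[
 \cdots \to H^i(\bar X) \to H^i(D) \to H^{i+1}(\bar X, D) \to H^{i+1}(\bar X) \to \cdots,
\]
from which the surjectivity of $H^i(\bar X) \to H^i(D)$ is equivalent to the injectivity of $H^{i+1}(\bar X, D) \to H^{i+1}(\bar X)$. Next I would invoke Lefschetz/Poincar\'e--Lefschetz duality: since $\bar X$ is a smooth oriented manifold of real dimension $2n$ and $D$ is a closed subspace, one has a natural isomorphism $H^{k}(\bar X, D) \simeq H_{2n-k}(\bar X \setminus D) = H_{2n-k}(X)$, compatible with the restriction/inclusion maps in the sense that the connecting map $H^{i+1}(\bar X, D) \to H^{i+1}(\bar X)$ is dual to the inclusion-induced map $H_{2n-i-1}(X) \to H_{2n-i-1}(\bar X)$. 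Hence the injectivity in the previous step becomes the injectivity of $H_{2n-i-1}(X) \to H_{2n-i-1}(\bar X)$.

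The third step is to dualise back: since we work with complex coefficients (so over a field), the universal coefficient theorem turns the injectivity of $H_{2n-i-1}(X) \to H_{2n-i-1}(\bar X)$ into the surjectivity of the restriction $H^{2n-i-1}(\bar X) \to H^{2n-i-1}(X)$. Finally, by Lemma~\ref{l:open}, this surjectivity is exactly the statement that the weight-$(2n-i-1)$ part of $H^{2n-i-1}(X)$ coincides with the whole of $H^{2n-i-1}(X)$, i.e.\ that $X$ is $H^{2n-i-1}$-pure. Chaining the four equivalences yields the proposition.

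The main obstacle I anticipate is making sure Lefschetz duality applies in the form I want, since $\bar X$ is only assumed smooth projective (complete) and $D$ is merely a closed subspace; the clean formulation I need is the duality for the pair (orientable manifold, closed subspace) giving $H^k(\bar X, D) \simeq H_{2n-k}(X)$, which is standard but worth spelling out, and the compatibility of the connecting homomorphism with the inclusion $X \hookrightarrow \bar X$ under this duality needs to be stated carefully. Everything else is a direct transposition of the surface-case argument, and in particular Lemma~\ref{l:open} is available in arbitrary dimension, so no further input is required.
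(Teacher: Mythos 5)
Your proposal is correct and follows essentially the same route as the paper: the paper proves the surface case (Lemma \ref{l:h2}) by exactly this four-step chain — long exact sequence of the pair $(\bar X,D)$, Lefschetz duality $H^{k}(\bar X,D)\simeq H_{2n-k}(X)$, the universal coefficient theorem over $\bC$, and Lemma \ref{l:open} — and states Proposition \ref{p:pure} as the verbatim extension of that argument to dimension $n$. Your attention to the compatibility of the connecting map with the duality is a reasonable point of care, but it introduces no new ingredient beyond what the paper's proof already uses.
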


\begin{corollary}\label{c:tree}
If $X$ is a $H^2$-pure surface, then  $H^1(D)$ has a pure Hodge structure and hence  the dual graph of $D$ is a tree.
\end{corollary}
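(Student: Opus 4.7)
The plan is to deduce from $H^2$-purity that $H^1(D)$ is pure of weight one, and then read off from Deligne's mixed Hodge structure on an SNC divisor that the dual graph $\Gamma_D$ has vanishing first Betti number.

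The first step is short. By Lemma \ref{l:h2}(b), $H^2$-purity of $X$ is exactly the surjectivity of the restriction $r : H^1(\bar X) \to H^1(D)$. Since $\bar X$ is smooth projective, $H^1(\bar X)$ is pure of weight $1$, and $r$ is a morphism of mixed Hodge structures. By strictness of MHS morphisms with respect to $W_\bullet$, one has $W_k H^1(D) = r(W_k H^1(\bar X))$ for every $k$, so $W_0 H^1(D) = 0$ and $W_1 H^1(D) = H^1(D)$; that is, $H^1(D)$ is pure of weight $1$.

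The second step is to interpret this purity combinatorially. I would recall Deligne's description of the MHS on $H^\bullet(D)$ for an SNC divisor whose irreducible components are smooth. Setting $D^{(0)} := \sqcup_i D_i$ and $D^{(1)} := \sqcup_{i<j}(D_i \cap D_j)$ (a disjoint union of points, since the $D_i$ are curves and no three components meet), the Mayer--Vietoris-type spectral sequence computing $H^\bullet(D)$ from these strata degenerates and yields a short exact sequence of mixed Hodge structures
\[
 0 \to H^1(\Gamma_D) \to H^1(D) \to \bigoplus_i H^1(D_i) \to 0,
\]
where $\Gamma_D$ is the dual graph of $D$. The left-hand term, being the cokernel of $H^0(D^{(0)}) \to H^0(D^{(1)})$, is pure of weight $0$; the right-hand term, being a sum of $H^1$'s of smooth projective curves, is pure of weight $1$.

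Combining the two steps, purity of $H^1(D)$ forces the weight-$0$ summand $H^1(\Gamma_D)$ to vanish. Under the standing hypothesis of \S\ref{intro} that $D$ is connected, $\Gamma_D$ is a connected simplicial $1$-complex with trivial first cohomology, and is therefore a tree. The only non-routine ingredient in the argument is the identification $W_0 H^1(D) \cong H^1(\Gamma_D)$; this is however a textbook instance of Deligne's mixed Hodge theory for SNC varieties, so I would simply quote it rather than reprove it. Everything else is formal.
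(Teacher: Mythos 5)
Your proposal is correct and follows essentially the same route as the paper: purity of $H^1(D)$ is obtained from the surjectivity of $H^1(\bar X)\to H^1(D)$ (Lemma \ref{l:h2}(b)) together with the purity of $H^1(\bar X)$, and the tree conclusion comes from identifying the weight-$0$ part of $H^1$ of a normal crossing curve with $H^1$ of the dual graph, which the paper cites from the classical theory of weights and you spell out via the Mayer--Vietoris spectral sequence. No issues.
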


\begin{proof}
Since $\bar X$ is a smooth complete variety, $H^1(\bar X)$ has a pure Hodge structure. We have shown above that $H^1(\bar X)\to H^1(D)$ is surjective, hence $H^1(D)$  has a pure structure too.
By the classical theory of weights (see e.g. \cite{Vo}) it follows that the weight $0$ part of $H^1$ of a normal crossing curve 
arises from the loops in the dual graph of $D$. Hence a simple normal crossing curve has $H^1$ pure if and 
only if its dual graph is a tree. 
\end{proof}

\begin{theorem}\label{p:points}\label{c:points}
 Let  $X$ be a $H^2$-pure and $H^3$-pure surface and let $f:X \to C$ be a 
map onto an affine curve.  Let $\bar f : \bar X \to \bar C$ be some compact resolution of $f$.
\begin{enumerate}
 \item If $f$ is proper then there is no horizontal component, $\bar C \m C = \{ p \}$ and  $D= \bar F_p$.
 
\item If  $b_3(X) > 0$, then  $f$ is proper.

\item 
If there is more than one horizontal component, then all horizontal components are rational,
     and $C\simeq \bC$.
 
\item  If $\bar f^{-1}(\bar C \m C)$ is not connected, then there is a unique horizontal component, and $\bar f$ has no invariant cycles.    
  \end{enumerate}
\end{theorem}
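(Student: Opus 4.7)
Properness of $f$ says $f^{-1}(K)=\bar f^{-1}(K)\cap X$ is compact in $\bar X$ for every compact $K\subset C$; since $X$ is open in $\bar X$ this forces $\bar f^{-1}(K)\cap D=\emptyset$, hence $\bar f(D)\subset\bar C\m C$. Consequently no component of $D$ is horizontal, and each component is vertical, lying in a fibre over $\bar C\m C$. The connectedness of $D$ together with the disjointness of fibres over distinct points of $\bar C$ forces all components to lie in a single fibre $\bar F_p$, whence $\bar C\m C=\{p\}$ and $D\subset\bar F_p$; the reverse inclusion follows because $p\notin C=f(X)$ gives $\bar F_p\cap X=\emptyset$.

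\textbf{Part (d), first assertion.} My plan is to establish this next, as the key combinatorial tool that also underlies (c). Assume $\bar f^{-1}(\bar C\m C)$ decomposes into connected components $E_1,\ldots,E_r$ with $r\geq 2$, and for contradiction suppose $D_a,D_b$ are two horizontal components. Lemma~\ref{l:connected} forces each of $D_a,D_b$ to meet every connected component of every fibre of $\bar f$, hence each meets every $E_j$. One thereby obtains two distinct simple paths from $D_a$ to $D_b$ in the dual graph of $D$, one routed through (a component of) $E_1$ and the other through $E_2$. This produces a cycle in the dual graph, contradicting its tree structure (Corollary~\ref{c:tree}, a consequence of $H^2$-purity).

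\textbf{Part (c).} Assuming at least two horizontal components, the contrapositive just established shows $\bar f^{-1}(\bar C\m C)$ is connected; since fibres over distinct points are disjoint, this forces $\bar C\m C=\{p_0\}$ and $\bar F_{p_0}$ connected. A further application of the cycle argument rules out vertical components over any $q\in C$: such a vertical $V$ would be a connected component of $\bar F_q$ disjoint from its $X$-part, hence met by both horizontals, producing two distinct paths in the dual graph between $D_a$ and $D_b$ (one through $V$, another through $\bar F_{p_0}$), again a cycle. For $\bar C\cong\bP^1$ and rationality of every horizontal I would combine the $H^2$-purity surjection $H^1(\bar X)\twoheadrightarrow H^1(D)$ with the Leray decomposition $H^1(\bar X)=\bar f^*H^1(\bar C)\oplus H^1(\bar F)^\pi$ (Deligne degeneration, reducing to primitive $\bar f$ via Stein factorisation), noting that $\bar f^*H^1(\bar C)$ embeds diagonally into the horizontal summands via the maps $\bar f|_{D_i}^*$; a Riemann--Hurwitz comparison exploiting two distinct dominant maps $D_i\to\bar C$ would then pin $g(\bar C)$ and each $g(D_i)$ to zero.

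\textbf{Parts (b) and (d), second assertion.} For (b), the starting point is the identity $b_3(X)=b_1(\bar X)-b_1(D)$, obtained from Lefschetz duality $H^3(X)\cong H_1^{\mathrm{BM}}(X)\cong H_1(\bar X,D)$, the long exact sequence of $(\bar X,D)$, and the dual $H_1(D)\hookrightarrow H_1(\bar X)$ of the $H^2$-purity surjection (Lemma~\ref{l:h2}). I would prove the contrapositive: if $f$ is not proper, (a) yields a horizontal component $D_0$, and the Leray splitting would identify $H^1(\bar X)$ with $H^1(D)$ --- the pullback part $\bar f^*H^1(\bar C)$ lands in $H^1(D_0)$ via $\bar f|_{D_0}^*$, while the invariant cycle part $H^1(\bar F)^\pi$ is realised, by Deligne's invariant cycle theorem, from $H^1$ of the vertical subtree over a point of $\bar C\m C$. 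Orchestrating the compatibility of these two embeddings is the main technical obstacle I expect. For (d), second assertion, Proposition~\ref{p:mono} identifies $H^1(\bar F)^\pi$ with $H^1(\bar X)/\bar f^*H^1(\bar C)$ for primitive $\bar f$, so the statement reduces to $H^1(\bar X)=\bar f^*H^1(\bar C)$; the hypothesis of disconnectedness of $\bar f^{-1}(\bar C\m C)$, combined with the uniqueness of the horizontal component from (d1), forces the local monodromy around the relevant points to permute nontrivially the connected components of the degenerate fibres, which in turn forces this equality.
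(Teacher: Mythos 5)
Your parts (a) and the first assertion of (d) are correct and coincide with the paper's arguments (properness $\Leftrightarrow$ no horizontal component, plus connectedness of $D$ from $H^3$-purity; and Lemma~\ref{l:connected} together with the tree property of Corollary~\ref{c:tree}). The real content of (b), of the rationality/$C\simeq\bC$ part of (c), and of the second assertion of (d) is, however, exactly what your proposal leaves open. The missing idea is the \emph{quantitative} use of purity: from the exact sequence $0\to H^1(\bar X,D)\to H^1(\bar X)\to H^1(D)\to 0$ one gets $b_1(\bar X)=b_3(X)+b_1(D)$; combining this with the injection $H^1(\bar X)/H^1(\bar C')\hookrightarrow H^1(\bar F'_p)$ at a point $p\in\bar C'\m C'$ (Proposition~\ref{p:mono}, after Stein factorisation) and with $b_1(D)=\sum_i b_1(D_i)$ (since $D$ is a tree) yields the inequality \eqref{eq:equality0}, in which the term $b_1(D_0)-b_1(\bar C')$ is non-negative by Riemann--Hurwitz as soon as a horizontal component $D_0$ exists; hence \emph{every} term vanishes. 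This one inequality simultaneously gives (b) ($b_3(X)=0$, contradiction), the rationality of all horizontal components and of $\bar C$ in (c), and the vanishing $H^1(\bar F'_q)=0$ at all points at infinity needed for (d). In your text this is precisely the step you defer: in (b) you explicitly flag the "orchestration of the two embeddings" as an unresolved obstacle (it can be made to work, but it is the heart of the proof, not a detail), and in (c) the genus statements are only a plan ("would then pin \dots to zero"); note also that the asserted splitting $H^1(\bar X)=\bar f^*H^1(\bar C)\oplus H^1(\bar F)^\pi$ for the non-smooth morphism $\bar f$ is never needed — the exact sequence of Proposition~\ref{p:mono} evaluated at a special fibre at infinity suffices for all the dimension counts.

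Two concrete errors/gaps beyond incompleteness. First, in (c) your claim that there are no vertical components of $D$ over points $q\in C$ is both unnecessary and unjustified: such a component in general meets the closure of $F_q$, so it is \emph{not} a connected component of $\bar F_q$, and no cycle in the dual graph arises; the paper only concludes (from \eqref{eq:equality0}) that these components are rational. Second, your argument for the last assertion of (d) — that disconnectedness of $\bar f^{-1}(\bar C\m C)$ plus uniqueness of the horizontal component "forces the local monodromy to permute components nontrivially, which forces $H^1(\bar X)=\bar f^*H^1(\bar C)$" — is not a proof: permutation of fibre components is an $H^0$-level phenomenon and does not control $H^1(\bar X)$. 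The actual closing move is that with $|\bar C'\m C'|\ge 2$ one applies \eqref{eq:equality0} at each point at infinity to get $H^1(\bar F'_p)=0$ for all such $p$; then Proposition~\ref{p:mono} gives $H^1(\bar X)\simeq H^1(\bar C')$, so the restriction $H^1(\bar X)\to H^1(\bar F)$ vanishes, and Deligne's global invariant cycle theorem yields the absence of invariant cycles.
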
   

\begin{proof}
\noindent
(a). We have the equivalence $f$ is proper $\Leftrightarrow$ there is no horizontal component. From the $H^3$-purity we get that $D$ is connected and therefore $\bar C \m C = \{ p \}$ and  $D= \bar F_p$. Note that we have not used the $H^2$-purity here.

\noindent
(b).
 The $H^2$-purity and  $H^3$-purity yield the exactness of the following sequence
  $$0 \to H^1(\bar X, D) \to H^1(\bar X) \to H^1(D) \to 0$$
 which implies the equality  $h^1(\bar X) = h^3(X) + h^1(D)$, 
where $h^i$ denotes the dimension\footnote{within this proof we prefer the notation $h^i$ for the betti numbers instead of the notation $b_i$ used everywhere else in the paper.} of the corresponding cohomology $H^i$.

Let us first assume that $f$ is primitive.
Let's then fix some $p\in \bar C\m C$. From Proposition \ref{p:mono} we have an injection $H^1(\bar X)/H^1(\bar C) \to H^1(\bar F_p)$.   It then follows:

\begin{equation}\label{eq:inequality}
h^1(\bar F_p)  \ge  h^1(\bar X)   -  h^1(\bar C) = h^3(X) + h^1(D) - h^1(\bar C).
\end{equation}

Let us write additively $D= \bar F_p + \sum_{q\in \bar C\m (C\cup \{ p\})} \bar F_q + D_0 + \sum_{i\neq 0} D_i + D_v$ as a divisor, without counting the multiplicities,
where $D_i$ for $i\ge 0$ denote the horizontal components, $D_v$ is the sum of the vertical components over points
of $C$,  $\bar F_p$ and $\bar F_q$ are the fibres of $\bar f$ over   $p, q \in \bar C \m C$. 
By substituting in \eqref{eq:inequality}, we obtain:

\sloppy 
$$h^1(\bar F_p) \ge 
                       h^3 (X)  + [ h^1(D_0) - h^1(\bar C) ]    +  h^1(\bar F_p)
                      + \sum_{q\neq p} h^1(\bar F_q)  +  \sum_{i\neq 0} h^1(D_i)  + h^1(D_v).$$
                      
Hence by cancelling $h^1(\bar F_p)$ we get:

\begin{equation}\label{eq:equality0}
 0  \ge h^3(X) + [ h^1(D_0) - h^1(\bar C) ]  + \sum_{i\neq 0} h^1(D_i) + h^1(D_v) + \sum_{q\neq p} h^1(\bar F_q).
\end{equation}

By Riemann-Hurwitz, the term $h^1(D_0) - h^1(\bar C)$ of \eqref{eq:equality0}  is non-negative whenever $D_0$ is not empty, i.e., $D$ has at least one horizontal component, equivalently,  $f:X\to   C$ is not proper. Since the other terms are obviously non-negative, the above inequality implies that all the terms are 0. Then $h^3(X) =0$, which is a contradiction.  We also must have $h^1(\bar C)>0$ since this term is the only one with negative sign in the inequality \eqref{eq:equality0}.

Now if $f$ is not primitive, then we apply Stein factorisation and deduce that this is proper, hence $f$ is proper by the commutativity of the factorisation diagram.

\smallskip
\noindent
(c).  First apply the Stein factorisation and use the inequality \eqref{eq:equality0} for $\bar f'$.   Let $D_1 \not= D_0$ be some other horizontal component. Then $h^1(D_1) =0$, hence $h^1(\bar C') =0$, so $h^1(D_0) =0$. Moreover, since $D$ is a tree, we get that there is no point $q\not= p$, hence $| \bar C' \m C'| =1$, and thus $C \simeq \bC$. Indeed, by Lemma \ref{l:connected}, $D_0$ and $D_1$ would produce a cycle in the dual graph of $D$ together with some connected component of $\bar F_p$ and some connected component of $\bar F_q$, and this contradicts Corollary \ref{c:tree}.

\smallskip
\noindent
(d). 
Two horizontal components  would produce a loop in the dual graph of $D$ together with two of the connected components of $\bar f^{-1}(\bar C\m C)$, which contradicts the fact that $D$ is a tree, by Lemma \ref{l:connected}.
 
  After taking the Stein factorisation $\bar f'$,  the inequality \eqref{eq:equality0} shows that the fibre $\bar F'_p$ of $\bar f'$ may
have nontrivial $H^1$ for at most one point $p\in \bar C' \m C'$. But since $|\bar C' \m C'|\geq 2$ 
 it follows from \eqref{eq:equality0} that $H^1(\bar F'_p)=0$ for all points $p\in \bar C'\m C'$.  By Proposition \ref{p:mono}, $H^1(\bar X) \simeq H^1(\bar C')$ and the map $H^1(\bar X) \to H^1(\bar F)$ is zero over $\bar C'$, hence over $\bar C$ too.   Therefore $\bar f$ has no global invariant cycles, by Deligne's theorem \cite{De}.
 
\end{proof}

  \begin{theorem}\label{t:genus}
 Let  $X$ be a $H^2$-pure and $H^3$-pure surface and let $f:X \to C$ be a non proper 
map onto an affine curve. Let $\bar f : \bar X \to \bar C$ be some compact resolution of $f$. 

If $g(\bar{C}) > 0$ or if $| \bar C \m C| \ge 2$  then $f$ has a unique horizontal component.

  If moreover $f$ is primitive and either $g(\bar{C}) > 0$ or $| \bar C \m C| > 2$, then $f$ is also simple.
\end{theorem}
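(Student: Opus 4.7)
For part (i), it suffices to invoke Theorem \ref{c:points}(c): should $D$ contain two or more horizontal components, then $C \simeq \bC$, hence $g(\bar C) = 0$ and $|\bar C \m C| = 1$, contradicting either branch of the hypothesis. So $D$ has a unique horizontal component $D_0$.

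For part (ii) the plan is, first, to pin down $g(D_0) = g(\bar C)$ from the sharpened form of inequality \eqref{eq:equality0}, next to force total ramification of the covering $\bar f_{|D_0} : D_0 \to \bar C$ above every point at infinity via a dual-graph argument, and finally to conclude with Riemann-Hurwitz. Since $f$ is primitive, \eqref{eq:equality0} applies to $\bar f$ directly. The Riemann-Hurwitz formula for any degree-$d$ cover of smooth projective curves gives $g(D_0) \geq g(\bar C)$, so $h^1(D_0) - h^1(\bar C) \geq 0$; the remaining summands of \eqref{eq:equality0} are manifestly non-negative, hence all must vanish. In particular $g(D_0) = g(\bar C)$.

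The next step uses Corollary \ref{c:tree}: the dual graph of $D$ is a tree. For each $q \in \bar C \m C$, I claim that $D_0 \cap \bar F_q$ reduces to a single point. Indeed, if two distinct intersection points existed, either they would both lie on one component $V$ of the connected fibre $\bar F_q$, producing a multi-edge $D_0 - V$ in the dual graph, or on two distinct components $V_1, V_2$ joined by a path inside $\bar F_q$, closing a circuit through $D_0$. Either case contradicts the tree property, so $\bar f_{|D_0}$ has a unique preimage of ramification index $d := \deg \bar f_{|D_0}$ above each $q \in \bar C \m C$.

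Putting these inputs together via Riemann-Hurwitz,
\[
2g(D_0) - 2 \;\geq\; d\,(2g(\bar C) - 2) + (d-1)\,|\bar C \m C|,
\]
and substituting $g(D_0) = g(\bar C)$ rearranges to
\[
0 \;\geq\; (d-1)\bigl[\,(2g(\bar C) - 2) + |\bar C \m C|\,\bigr].
\]
Under either hypothesis $g(\bar C) > 0$ or $|\bar C \m C| > 2$ the bracket is strictly positive, forcing $d = 1$, i.e.\ $f$ is simple. The only delicate point in the plan is the dual-graph argument for uniqueness of the preimage over each $q$, but it reduces to a short combinatorial case split once Corollary \ref{c:tree} is in hand.
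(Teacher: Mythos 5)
Your proposal is correct and follows essentially the paper's own route: the purity inequality \eqref{eq:equality0} (valid since $f$ is primitive) forces $g(D_0)=g(\bar C)$, Corollary \ref{c:tree} forces $D_0\to\bar C$ to be totally ramified over every point of $\bar C\setminus C$, and Riemann--Hurwitz then excludes $d>1$; your only streamlinings are invoking Theorem \ref{c:points}(c) in contrapositive for the uniqueness of the horizontal component (the paper uses part (d) together with the Stein factorisation for the case $g(\bar C)>0$) and merging the paper's case split $g(\bar C)=1$ versus $g(\bar C)=0$ into the single inequality $0\ge (d-1)\bigl[(2g(\bar C)-2)+|\bar C\setminus C|\bigr]$. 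The one fact you use without comment --- connectedness of the fibres $\bar F_q$ over $q\in\bar C\setminus C$ --- does hold here, since primitivity of $f$ gives primitivity of $\bar f$ and then Stein factorisation (Zariski connectedness) makes every fibre of $\bar f$ connected; the paper's own total-ramification argument relies on the same implicit point.
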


\begin{proof}
  If $|\bar C \m C| \ge 2$ then  $\bar f^{-1}(\bar C \m C)$ is not connected and one applies  Theorem \ref{c:points}(d) to get that there is a unique horizontal component.

Let us assume now $g(\bar{C}) > 0$.
We consider the Stein factorisation $\bar f' : \bar X \to \bar C'$.  From  (\ref{eq:equality0}) and by the inequality $g(D_0) \ge g(\bar C') \ge g(\bar C) >0$ which is a consequence of the  Riemann-Hurwitz formula, it follows  from Theorem \ref{c:points}(c)
that there is only one horizontal component $D_0$ for $\bar f'$, hence a unique horizontal component for $\bar f$.

For our second claim, let us now assume that $f$ is primitive. 

If the degree $d$ of  $D_0 \to \bar C$ is $>1$ then either $g(\bar C) =1$ or $g(\bar C) =0$. If $g(\bar C) =1$
then there are no ramifications. But on the other hand,
since $D$ is a tree (Corollary \ref{c:tree}), the horizontal component must be totally ramified over any point in $\bar{C} \setminus C$ since otherwise it will produce cycles in the dual graph of $D$. This gives a contradiction to $d>1$. Thus the degree $d$ must be 1, hence $f$ is simple. 
 
If $g(\bar C) =0$ then, by Riemann-Hurwitz, either $d=1$ hence $f$ is simple, or $d>1$ and  there must be at least a ramification. In the later case, since the dual graph of $D$ is a connected tree, all such ramifications over some point of $\bar{C} \setminus C$ must be total and by Riemann-Hurwitz we get  $|\bar C \m C| \le 2$, which contradicts our assumption $| \bar C \m C| > 2$.
\end{proof}

\begin{example}\label{ex:simple}
This shows that if $f$ is not primitive then it might be not simple. Let $C_1$ and $C_2$ be two smooth projective curves of  genus $> 1$. Let $h : C_1 \to \bP^1$ be some regular map such that $\deg h >1$ and $h^{-1}(\infty) = p$, for some $p\in C_1$. Let $\bar X := C_1 \times C_2$ and $D:= C_1 \times \{ q\} \cup \{ p\}\times C_2$ be the union of  sections of  different projections. Since the dual graph of $D$ is a connected tree, $X := \bar X\m D$ is $H^2$-pure and $H^3$-pure. Actually this is even $H^1$-pure, see Lemma \ref{l:h2}(c).

Let $\pi : C_1 \times C_2 \to C_1$ denote the projection and let $f := h\circ \pi :  X \to \bC$. Then $f$ is not primitive and not simple. One may produce other examples taking instead of $h$ some map $C_1 \to C_3$ with $g(C_3)>0$ which is totally ramified at $p$.
\end{example}

The next result extends Dimca's \cite[Theorem 1(i-iii)]{Di} for polynomials of two variables. 

\begin{theorem}\label{t:monpoint}
Let $X$ be an $H^2$ and $H^3$-pure surface and $f:X\to C$ be a regular map onto an affine curve $C$ which is not proper.
Let $p\in \bar{C}\m C$ and $\gamma_p$ be a `small' loop around $p$. Let $T_p : H^1(F) \to H^1(F)$ be the  monodromy of $f$ defined by $\gamma_p$ and assume that $\bar F_p$ is connected. 

Then:

\begin{enumerate}
\item If $T_p =1$ then $f$ is simple and $\Mon \bar f=1$. If $f$ is simple and $\Mon \bar f=1$ then $T_p$ is unipotent.

 \item If $\bar F_p$ is a tree of rational curves then the eigenvalue 1 of the matrix of $T_p$ occurs in  Jordan blocks of size 1 and its multiplicity is the number of horizontal divisors minus one.
 \item  If the general fibre $\bar F$ is rational then $T_p$ is finite.
 \end{enumerate}

\end{theorem}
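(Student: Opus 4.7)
The strategy is to analyse $T_p$ via the $T_p$-equivariant short exact sequence \eqref{eq:exact1}, splitting the problem into the projective local monodromy $T_p^{\bar f}$ on $H^1(\bar F)$ and the permutation action $\sigma_p$ on $\tilde H_0(\bar F \setminus F) \simeq \bC^{\delta - 1}$. The key structural observation, common to all three parts, is that by Corollary \ref{c:tree} the dual graph of $D$ is a tree, so the connectedness of $\bar F_p$ forces each horizontal component $D_i$ to meet $\bar F_p$ in a unique point $q_i$ (two such intersection points together with the subtree $\bar F_p$ would close a cycle). Consequently $\sigma_p$ decomposes as one cyclic permutation per horizontal $D_i$, of length $d_i = \deg(\bar f|_{D_i})$ equal to the local ramification index at $q_i$.

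For part (a), assume first $T_p = I$; then $\sigma_p = 1$ and $T_p^{\bar f} = 1$. Triviality of $\sigma_p$ forces every $d_i = 1$, so $f$ is simple. For primitivity of $\bar f$, Stein factorise as $\bar f = \mu \circ \bar f'$; if $\deg \mu > 1$ then $\bar F_p$ connected forces $|\mu^{-1}(p)| = 1$, hence $\mu$ is totally ramified at $p$, so $T_p$ acts through a non-trivial cyclic permutation of the connected components of $\bar F$, a contradiction. Thus $\bar f$ is primitive and $\Mon^0 \bar f = 1$. For $\Mon^1 \bar f = 1$, the local invariant cycle theorem at $p$ applied to $T_p^{\bar f} = 1$ yields the surjection $H^1(\bar F_p) \twoheadrightarrow H^1(\bar F)$. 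Combining with the $H^2$-purity of $X$, Lemma \ref{l:h2}(b) gives $H^1(\bar X) \twoheadrightarrow H^1(D)$; since the dual graph is a tree, $H^1(D) = \bigoplus_{V \subset D} H^1(V)$, and restriction to the subtree $\bar F_p \subset D$ yields $H^1(D) \twoheadrightarrow H^1(\bar F_p)$. Chaining, $H^1(\bar X) \twoheadrightarrow H^1(\bar F)$, hence $\Mon^1 \bar f = 1$ by Deligne's invariant cycle theorem; triviality of $\Mon^2 \bar f$ is automatic. Conversely, if $f$ is simple and $\Mon \bar f = 1$, then $\sigma_p = 1$ and $T_p^{\bar f} = 1$, and \eqref{eq:exact1} gives $(T_p - I)^2 = 0$, i.e., $T_p$ is unipotent.

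For part (b), the reduced support of $\bar F_p$ is a tree of $2$-spheres, hence $H^1(\bar F_p, \bC) = 0$; the local invariant cycle theorem (after semi-stable reduction if the fibre is non-reduced) then forces $H^1(\bar F)^{T_p^{\bar f}} = 0$, so $T_p^{\bar f} - I$ is invertible. The $1$-eigenspace of $\sigma_p$ on $\bC^{\delta}$ has dimension equal to the number of horizontal components (one invariant per cycle), hence dimension that number minus one on the quotient $\bC^{\delta-1}$; invertibility of $T_p^{\bar f} - I$ lets every $\sigma_p$-fixed vector lift to a $T_p$-fixed vector in $H^1(F)$, producing the stated multiplicity. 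For size-$1$ Jordan blocks at eigenvalue $1$, suppose $(T_p - I)v = u$ with $u \neq 0$ a $1$-eigenvector; reducing modulo $H^1(\bar F)$, the image $\bar u$ is both $\sigma_p$-fixed and in $\im(\sigma_p - I)$, hence $\bar u = 0$ by semisimplicity of the permutation $\sigma_p$; then $u \in H^1(\bar F)^{T_p^{\bar f}} = 0$, a contradiction.

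Part (c) is immediate: if $\bar F \simeq \bP^1$ then $H^1(\bar F) = 0$ and $H^1(F) \simeq \bC^{\delta - 1}$, so $T_p$ acts as the permutation $\sigma_p$, which has finite order. The main difficulty is the global assertion $\Mon^1 \bar f = 1$ in part (a), where propagating local triviality at a single point to a global vanishing of the monodromy requires combining the local invariant cycle theorem with the $H^2$-purity of $X$ through the chain of surjections above; the rest is a combinatorial analysis of $\sigma_p$ and linear algebra on the extension \eqref{eq:exact1}.
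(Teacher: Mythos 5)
Your treatment of part (a) follows the paper's own route almost verbatim: total ramification of each horizontal component over $p$ deduced from the tree structure of $D$ together with the connectedness of $\bar F_p$, the local invariant cycle theorem giving $H^1(\bar F_p)\twoheadrightarrow H^1(\bar F)$ when $T_p^{\bar f}=1$, the $H^2$-purity chain $H^1(\bar X)\twoheadrightarrow H^1(D)\twoheadrightarrow H^1(\bar F_p)$, and Deligne's global invariant cycle theorem to conclude $\Mon^1\bar f=1$; the unipotency statement via the $T_p$-equivariant sequence \eqref{eq:exact1} is also exactly the paper's argument (it refers to the proof of Proposition \ref{p:equiv}). For parts (b) and (c) the paper merely cites Dimca's arguments; your explicit version (vanishing of the $T_p^{\bar f}$-invariants from $H^1(\bar F_p)=0$, one cycle of $\sigma_p$ per horizontal component, dimension count of the fixed space on the cokernel, lifting of fixed vectors using the invertibility of $T_p^{\bar f}-I$, and the semisimplicity argument excluding Jordan blocks of size $\ge 2$) is correct and is a useful fleshing-out of what the paper leaves implicit.

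The one step that does not work is your derivation of primitivity of $\bar f$ (i.e.\ $\Mon^0\bar f=1$). From $T_p=1$ on $H^1(F)$ you cannot conclude that a nontrivial cyclic permutation of the connected components of $\bar F$ is impossible: such a permutation can act trivially on $H^1(F)$, for instance when every component of $F$ is simply connected, so there is no contradiction at that point. Concretely, $f(z,w)=z^d$ on $X=\bC^2$, with compact resolution $\bP^1\times\bP^1\to\bP^1$, satisfies every stated hypothesis ($X$ is $H^2$- and $H^3$-pure, $\bar F_\infty=\{\infty\}\times\bP^1$ is connected, and $H^1(F)=0$ forces $T_\infty=1$), yet $\bar f$ is not primitive and $f$ is not simple. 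So primitivity cannot be deduced from the stated hypotheses; it has to be taken as a (tacit) hypothesis, as it effectively is in the paper's proof and in Dimca's theorem being generalised. Note also that your identification $\coker i^*\simeq \tilde H_0(\bar F\m F)\simeq\bC^{\delta-1}$, on which both your simplicity argument and the multiplicity count in (b) rest, already presupposes that $\bar F$ is connected. Once primitivity is made explicit, your argument and the paper's coincide and the rest of your proof is sound.
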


\begin{proof}  (a). The fact that $D$ is a connected tree and that $\bar F_p$ is connected implies that every horizontal component of $D$ is totally ramified over the point $p$. The triviality of $T_p$ 
implies in addition that the local degree of $\bar f_{|D_i}$ over $p$ must be $1$, for any  horizontal component $D_i$, i.e., $f$ must be locally simple over $p$ and therefore simple.  

If $T_p=1$ then $\bar T_p : H^1(\bar F) \to  H^1(\bar F)$ is the identity. Then the \textit{local invariant cycle theorem} \cite{Cl} tells that the image of the map induced by the restriction from a small tubular neighbourhood $H^1(U_p)\cong H^1(\bar F_p) \to H^1(\bar F)$ is surjective, 
 where $\bar F$ denotes here the general fibre in $U_p$. Therefore one has an isomorphism $H^1(\bar F_p) \cong H^1(\bar F)$.
Since $\bar F_p\subset D$, by 
the assumption $H^2$-pure we obtain that  both following maps $H^1(\bar X) \to H^1(D) \to H^1(\bar F_p)$ are surjective, hence their composition too. Then the global invariant cycle theorem \cite{De} yields that $\Mon \bar f =1$.

The last statement follows from the proof of Proposition \ref{p:equiv}.

\noindent
(b). and (c). follow by the same arguments of \cite{Di}, from the exact sequence (\ref{eq:exact1}) and the analysis given in the proof of Proposition \ref{p:trivmon}  using the condition that $D$ is a connected tree and that $\bar F_p$ is connected which imply that any horizontal divisor is totally ramified over each point $p\in \bar C\m C$.
  
 \end{proof}

From the statement and proof of Theorem \ref{t:monpoint} we get now easily the following, which recovers the results by Miyanishi-Sugie \cite{MS} and Dimca \cite{Di} cited in the Introduction.

\begin{corollary}
  Let $X$ be $H^2$ and $H^3$-pure. Let $C = \bC$ and let $\bar F_\infty$ be connected. 
  If $T_\ity =1$ and $b_1(\bar X)= 0$ then $f$ is rational and $\Mon f= 1$. 
 Reciprocally, if $f$ is rational and $T_\ity = 1$ then $\Mon f= 1$.
 \fin
 \end{corollary}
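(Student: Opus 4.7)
My plan is to reduce both implications to Theorem~\ref{t:monpoint}(a) combined with Proposition~\ref{p:mono}, then control $\Mon^1 f$ via the exact sequence~(\ref{eq:exact1}). I assume $f$ is not proper throughout, since otherwise $F=\bar F$ and the claim reduces at once to $\Mon \bar f=1$. In both directions the hypotheses $T_\ity=1$ and $\bar F_\ity$ connected invoke Theorem~\ref{t:monpoint}(a), giving that $f$ is simple and $\Mon \bar f=1$. From $\Mon^0 \bar f=1$ the fibre $\bar F$ is connected, hence so is the open curve $F$, yielding $\Mon^0 f=1$; and $\Mon^2 f=1$ is automatic because $F$ is a non-complete smooth curve, so $H^2(F)=0$. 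Only $\Mon^1 f=1$ remains to be established.

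For the direct implication I first extract the rationality of $\bar F$ from the cohomological hypothesis. Since $\Mon \bar f=1$ implies in particular that $\bar f$ is primitive, Proposition~\ref{p:mono} yields an isomorphism $H^1(\bar X)/H^1(\bar C) \simeq H^1(\bar F_p)$ for every $p \in \bar C = \bP^1$. With $H^1(\bar C)=0$ and $b_1(\bar X)=0$ this forces $H^1(\bar F_p)=0$; taking $p$ generic gives $H^1(\bar F)=0$, hence $\bar F \simeq \bP^1$, so $f$ is rational. For the reverse implication this rationality is given.

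With $H^1(\bar F)=0$ in hand in both directions, the exact sequence~(\ref{eq:exact1}) collapses to
\[
H^1(F) \simeq \coker i^* \simeq \tilde H_0(\bar F \m F),
\]
which has rank $\delta-1$ and is spanned by the classes around the $\delta$ intersection points of $\bar F$ with the horizontal components of $D$. Simplicity of $f$ means each covering $\bar f_{|D_i}$ has degree $1$, so the geometric monodromy (as described in the proof of Proposition~\ref{p:trivmon}(c)) fixes each of these $\delta$ points and therefore acts as the identity on $\tilde H_0(\bar F \m F)$, and hence on $H^1(F)$. This gives $\Mon^1 f=1$.

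The main obstacle is the first implication: extracting $\bar F \simeq \bP^1$ from the purely cohomological condition $b_1(\bar X)=0$. This is exactly where the $H^2$- and $H^3$-purity of $X$ is essential, since it underpins Proposition~\ref{p:mono}'s identification of $\ker(H^1(\bar X) \to H^1(\bar F_p))$ with $H^1(\bar C)$ through the mixed Hodge theoretic argument (Lemmas~\ref{l:leray} and~\ref{l:open}). Once this identification is in place, the remaining bookkeeping on the exact sequences is straightforward, and the converse direction needs nothing more than the description of the simple monodromy on the $\delta$ punctures.
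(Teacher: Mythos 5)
Your proof is correct and is essentially the deduction the paper intends (the corollary is stated without a written proof precisely because it is this deduction): Theorem~\ref{t:monpoint}(a) yields simplicity and $\Mon \bar f=1$, Proposition~\ref{p:mono} together with $b_1(\bar X)=0$ and $H^1(\bP^1)=0$ forces $H^1(\bar F_p)=0$ for all $p$, hence rationality, and then the sequence~\eqref{eq:exact1} plus the fact that a simple map fixes each of the $\delta$ punctures kills $\Mon^1 f$, with $\Mon^0 f$ and $\Mon^2 f$ handled as you indicate. The only loose phrase is your dismissal of the proper case, where the claim does not ``reduce at once'' to $\Mon\bar f=1$ (one would still have to produce $\Mon\bar f=1$ and rationality, and Theorem~\ref{t:monpoint} is not available there); but that case lies outside the corollary's intended scope, which inherits the non-properness hypothesis of Theorem~\ref{t:monpoint}, so this does not affect the substance of your argument.
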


%

%

%%%%%%%%%%%
%%%%%%%%%%%%%%%%%%%%%%%%%%%%%%%%%

\section{Bounding the number of reducible fibres}\label{s:bound}

In this section we focus to another aspect, investigated by Kaliman \cite{Kal} in case of polynomials $f : \bC^2 \to \bC$, namely the number of reducible fibres. Kaliman proved that this number is at most $\delta -1$, where $\delta = \#$horizontal components of $D$, and remarked that this holds (with almost the same proof) for an acyclic surface $X$ instead of $\bC^2$. Our aim is to extend the setting to more general surfaces $X$. Let us remark that the number of reducible fibres is finite only if $f$ is primitive.

\begin{theorem}\label{t:one}
Let $f:X\to C$ be a primitive morphism from a $H^2$-pure and $H^3$-pure surface onto an affine curve. Then the number of reducible fibres of $f$ is at most $b_2(X)+ \delta -1$.
\end{theorem}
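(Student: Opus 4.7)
The plan is to bound $s$ by exhibiting, for each reducible fibre, enough independent contributions in a subquotient of $H_2(\bar X;\bC)$ whose dimension is controlled by $b_2(X)+\delta-1$.

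Let $p_1,\ldots,p_s \in C$ be the reducible fibres, with $F_{p_i}=\bigcup_{j=1}^{r_i} F_{p_i,j}$ of multiplicities $m_{p_i,j}$, closures $\bar F_{p_i,j}\subset\bar X$, and let $v_{p_i}$ count the vertical components of $D$ over $p_i$. Let $V_{\mathrm{vert}} \subset H_2(\bar X;\bC)$ be the subspace spanned by the vertical components of $D$, and $W:=H_2(\bar X;\bC)/V_{\mathrm{vert}}$. Since $\bar F_p\subset D$ for any $p\in\bar C\setminus C$, the class $[\bar F]$ lies in $V_{\mathrm{vert}}$, so the fibre equivalence $\sum_j m_{p_i,j}[\bar F_{p_i,j}]=[\bar F]-[\text{vertical part of }\bar F_{p_i}]$ descends to $\sum_j m_{p_i,j}q_{p_i,j}=0$ in $W$, where $q_{p_i,j}$ is the image of $[\bar F_{p_i,j}]$. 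Because any vertical component $V$ of $D$ satisfies $[V]\cdot[\bar F]=0$, the pairing $\phi(\alpha):=\alpha\cdot[\bar F]$ descends to a well-defined linear functional $\phi:W\to\bC$; this is nonzero (by $\phi([D_k])=d_k$ on a horizontal $D_k$ when $\delta\geq 1$, and by non-degeneracy of the intersection pairing on $H_2(\bar X)$ more generally). Hence $W_0:=\ker\phi$ has codimension one in $W$, and all $q_{p_i,j}$ lie in $W_0$ since $[\bar F_{p_i,j}]\cdot[\bar F]=0$.

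By Zariski's lemma, the intersection form on the components of $\bar F_{p_i}$ is negative semi-definite with one-dimensional kernel spanned by $[\bar F]$, and components from distinct fibres are mutually orthogonal. Letting $U\subset H_2(\bar X)$ denote the span of $[\bar F]$, the classes $[\bar F_{p_i,j}]$, and the vertical components of $D$ over the $p_i$'s, this yields $\dim U = 1 + \sum_i (r_i - 1) + \sum_i v_{p_i}$. A direct computation identifies $U\cap V_{\mathrm{vert}}$ with the span of $[\bar F]$ together with the vertical $D$-components over the $p_i$'s, of dimension $1+\sum_i v_{p_i}$, so the image of $U$ in $W$ (the span of the $q_{p_i,j}$'s) has dimension exactly $\sum_i(r_i-1)$.

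Finally, combining $H^2$- and $H^3$-purity with the long exact sequence of the pair $(\bar X,D)$ and Lefschetz duality (cf.\ Lemma \ref{l:h2}) yields
\[ 0 \to H_2(X) \to H_2(\bar X) \to H^2(D) \to H_1(X) \to H_1(\bar X) \to 0, \]
giving $b_2(\bar X)=b_2(X)+(\delta+v)-(b_1(X)-b_1(\bar X))$ and showing that the total number of linear relations among the $\delta+v$ components of $D$ in $H_2(\bar X)$ equals $b_1(X)-b_1(\bar X)$. In particular the number of relations among the vertical components alone is at most $b_1(X)-b_1(\bar X)$, so $\dim V_{\mathrm{vert}}\geq v-(b_1(X)-b_1(\bar X))$. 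Hence $\dim W = b_2(\bar X)-\dim V_{\mathrm{vert}}\leq b_2(X)+\delta$ and $\dim W_0\leq b_2(X)+\delta-1$. Therefore
\[ s \leq \sum_i (r_i - 1) = \dim(\text{image of }U\text{ in }W) \leq \dim W_0 \leq b_2(X)+\delta-1. \]
The main delicate step is the computation of $\dim(U\cap V_{\mathrm{vert}})$ via Zariski's lemma: one must carefully track the interactions between the fibre components $[\bar F_{p_i,j}]$, the vertical $D$-components over the $p_i$'s, and the class $[\bar F]$, so that no hidden relations reduce the image below $\sum_i(r_i-1)$. Once this is pinned down, the purity hypotheses feed cleanly into the dimension bound on $W_0$.
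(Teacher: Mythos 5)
Your argument is correct, and it takes a genuinely different route from the paper's. The paper proves the bound by combining two counts: Lemma \ref{eq:second} (independence in $H_2$ of the non-affine fibre components, via Zariski's lemma and $H^2$-purity) and an Euler-characteristic identity (Lemma \ref{l:first}, resting on the exact sequence of $(\bar X,D)$ and \eqref{eq:betti2}), and then runs a case-by-case analysis on $g(\bar C)$ and $|\bar C\m C|$ that invokes the monodromy/Hodge-theoretic results (Proposition \ref{p:mono}, Theorems \ref{c:points} and \ref{t:genus}) to control simplicity, $b_1(\bar F_\ity)$, etc. You instead make a single rank computation in $W=H_2(\bar X)/V_{\mathrm{vert}}$, cut down by the functional $\alpha\mapsto \alpha\cdot[\bar F]$, which is much closer in spirit to Kaliman's original argument (and to Lorenzini--Vistoli-type counts): each reducible fibre contributes $r_i-1$ independent classes to $W_0$, and $\dim W_0\le b_2(X)+\delta-1$ follows from the same exact sequence \eqref{eq:h}/\eqref{eq:betti2} plus nondegeneracy of the intersection form. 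What your approach buys: no case analysis, no use of Deligne's invariant cycle theorem or of Theorems \ref{c:points}/\ref{t:genus}, and in fact apparently only $H^2$-purity (the $H^3$-purity enters the paper's proof only through those auxiliary results), so your argument is both shorter and formally a little stronger. Three points should be written out to make it airtight: (i) the step you flag, $\dim(U\cap V_{\mathrm{vert}})=1+\sum_i v_{p_i}$, does hold, and the clean way to see it (and the equality $\dim U=1+\sum_i(r_i-1)+\sum_i v_{p_i}$) is the standard consequence of Zariski's lemma that a null-homologous divisor supported on fibres of $\bar f$ is, over each point of $\bar C$, a rational multiple of the entire scheme-theoretic fibre, with the multiples summing to zero; since the full multiplicity vector over a point of $C$ has nonzero entries on the affine components, any relation involving only vertical $D$-components over points of $C$ is forced to vanish there, and the at-infinity contributions are multiples of $[\bar F]$; (ii) this fibrewise statement needs every fibre of $\bar f$ to be connected, which follows from primitivity via Stein factorisation, so say so; (iii) in the proper case ($\delta=0$) the nonvanishing of $\phi$ on $W$ needs $[\bar F]\neq 0$ in $H_2(\bar X;\bC)$, which you get by pairing with an ample class. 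With these details supplied, your proof is complete and matches the paper's bound $b_2(X)+\delta-1$ (indeed, the refinement of Note \ref{n:2} could also be extracted from your $V_{\mathrm{vert}}$-quotient).
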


We need two lemmas. Let $\bar f : \bar X \to \bar C$ be a compact resolution of $f$. 
We use the following notations: $\bar F$ is the generic fibre of $\bar f$,  $\bar F_p := \bar f^{-1}(p)$ (and not the closure of the fibre $f^{-1}(p))$.

\begin{lemma}\label{eq:second}
Let $f:X\to C$ be a primitive morphism from a $H^2$-pure surface onto an affine curve $C$.
\begin{enumerate}
\item If $f$ is non-proper then 
$\sum_{p\in  C} b_2(F_p) \le  b_2(X)$.
\item If $f$ is proper then $\sum_{p\in  C} (b_2(F_p)-1) \le  b_2(X)-1$.
\end{enumerate}
\end{lemma}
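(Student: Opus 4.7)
The plan is to extract the required inequalities from the corner of the Leray spectral sequence of $f:X\to C$:
\[
E_2^{p,q} = H^p(C, R^q f_*\bC)\;\Longrightarrow\; H^{p+q}(X).
\]
Since $C$ is a smooth affine curve, $H^p(C,-)=0$ for $p\ge 2$ on any constructible sheaf of $\bC$-vector spaces, so $E_2^{p,q}=0$ whenever $p\ge 2$. Consequently every differential entering or leaving $E_r^{0,2}$ vanishes for all $r\ge 2$, the corner survives, and $E_\infty^{0,2} = E_2^{0,2} = H^0(C, R^2 f_*\bC)$. Because $E_\infty^{0,2}$ is the top graded piece $H^2(X)/F^1 H^2(X)$ of the induced filtration, this will yield the key inequality
\[
b_2(X) \;\ge\; \dim H^0(C, R^2 f_*\bC),
\]
and it will then suffice to compute the right-hand side in each case.

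Next I shall identify the stalk $(R^2 f_*\bC)_p$ with $H^2(F_p) \cong \bC^{b_2(F_p)}$ for every $p\in C$. For a small disc $U\ni p$, the compact resolution $\bar f$ is proper, so applying the Thom--Mather isotopy lemma to a Whitney stratification of $\bar X$ refining both $\bar F_p$ and the SNC divisor $D$ produces a stratified deformation retraction of $\bar f^{-1}(U)$ onto $\bar F_p$ that preserves $D$. Restricting to the complement of $D$ descends this to a homotopy equivalence $f^{-1}(U) \simeq \bar F_p\setminus(\bar F_p\cap D) = F_p$, whence
\[
(R^2 f_*\bC)_p \;=\; H^2(f^{-1}(U)) \;=\; H^2(F_p),
\]
and the last group has dimension equal to the number of compact irreducible components of $F_p$, that is, $b_2(F_p)$.

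It remains to compute $H^0(C, R^2 f_*\bC)$ in each case. In case (a), $f$ is non-proper, so the general fibre $F$ is a smooth non-compact curve with $H^2(F)=0$, and $R^2 f_*\bC$ is a skyscraper sheaf; its global sections sum up to $\sum_{p\in C} b_2(F_p)$, yielding the inequality (a). In case (b) the general fibre is compact irreducible with $H^2(F)=\bC$; over the smooth locus $C^*$ the sheaf $R^2 f_*\bC$ is the trivial rank-one local system (the monodromy fixes the fundamental class), and globally there is an exact sequence
\[
0\;\to\;\bC_C\;\to\; R^2 f_*\bC\;\to\; Q\;\to\; 0,
\]
where $Q$ is a skyscraper of rank $b_2(F_p)-1$ at each critical $p$. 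A direct section-count shows $\dim H^0(C, R^2 f_*\bC) = 1 + \sum_{p\in C}(b_2(F_p)-1)$: a global section is a constant $c\in\bC$ on $C^*$ together with an arbitrary lift of $c$ at each critical $p$ through the surjective generization map $H^2(F_p)\to H^2(F)$; equivalently the connecting homomorphism $H^0(C,Q)\to H^1(C,\bC)$ vanishes, since $\mathrm{Ext}^1(\bC_p,\bC_C)=0$ on the affine curve $C$. Combined with the first step, this gives $b_2(X)-1 \ge \sum_{p\in C}(b_2(F_p)-1)$, which is (b).

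The step I expect to be the main obstacle is the stalk identification in case (a): because $f$ itself is not proper, proper base change does not apply directly, and one must import the tube-to-fibre retraction from $\bar f$ and then verify that it descends to $X=\bar X\setminus D$. The Whitney stratification refining both $\bar F_p$ and $D$ is precisely what guarantees this descent. A secondary subtlety is the vanishing of the connecting map in case (b), but this reduces to the routine local $\mathrm{Ext}^1$ computation on the affine curve $C$.
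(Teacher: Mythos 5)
Your case (b) is essentially sound (for proper $f$ the tube over a small disc does retract onto $\bar F_p=F_p$, the generization maps $H^2(F_p)\to H^2(F)$ are surjective, and the section count $1+\sum_p(b_2(F_p)-1)\le b_2(X)$ follows from the Leray corner $E_\infty^{0,2}=H^0(C,R^2f_*\bC)$ together with Artin vanishing on the affine curve). But case (a), which is the heart of the lemma, has a genuine gap exactly at the step you flagged: the identification $(R^2f_*\bC)_p\cong H^2(F_p)$ for non-proper $f$. A deformation retraction of $\bar f^{-1}(U)$ onto $\bar F_p$ that merely \emph{preserves} the closed set $D$ does not restrict to a homotopy equivalence of complements, because points of $\bar f^{-1}(U)\m D$ may be retracted onto $\bar F_p\cap D$; and in fact no homotopy equivalence $f^{-1}(U)\simeq F_p$ exists in general under the hypotheses of the lemma. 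Take $X=\bC^2$ (which is $H^2$- and $H^3$-pure) and $f=x+x^2y$, which is primitive and non-proper: over a small disc $U$ around the atypical value $0$ the tube $f^{-1}(U)$ is connected, while $F_0=\{x=0\}\cup\{xy=-1\}$ has two components, so already $(R^0f_*\bC)_0\neq H^0(F_0)$. This is the standard phenomenon of cycles escaping into $D$ (singularities at infinity), and it is precisely what your stratified-retraction argument cannot rule out; so the skyscraper computation $H^0(C,R^2f_*\bC)=\sum_p b_2(F_p)$ is unjustified as it stands. (Note also that for the inequality you only need a lower bound $\dim(R^2f_*\bC)_p\ge b_2(F_p)$, but that too requires an argument.)

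The natural way to obtain that lower bound is to show that the classes of the compact components of $F_p$ are independent in $H_2$ of the tube, which one does by pushing them into $H_2(\bar X)$ and invoking Zariski's lemma (a proper subset of the components of a fibre of $\bar f$ has negative definite intersection matrix) -- and at that point you have reconstructed the paper's proof, which dispenses with Leray altogether: the compact components of all fibres $F_p$, $p\in C$, are components of fibres of $\bar f$ not exhausting any fibre (in the non-proper case a horizontal component meets every $\bar F_p$ in a component not contained in $X$), hence are linearly independent in $H_2(\bar X)$ by Zariski's lemma, hence independent in $H_2(X)$ (the paper uses the $H^2$-purity here, via the injectivity of $H_2(X)\to H_2(\bar X)$), which gives $\sum_p b_2(F_p)\le b_2(X)$ directly; in the proper case one omits one component from each fibre and observes that the class of the general fibre is independent of the remaining ones, yielding the $-1$. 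So: different and attractive strategy, correct in the proper case, but the non-proper stalk identification is a real gap, not a technicality, and its repair collapses your argument into the paper's intersection-theoretic one.
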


\begin{proof} (a). 
Let $A_p$ denote the union of all the affine irreducible components of $F_p$. Then the intersection matrix of the components of the divisor $\sum_{p\in C}\cl(F_p \m A_p)$ (where $\cl$ means taking the closure in $\bar X$) is negative definite, by Zariski's Lemma, see \cite[Lemma 8.2, (9) and (10)]{BPV}. Therefore these components are linearly independent in $H_2(\bar X)$.   It follows that they  are 
independent in $H^2(X)$ since they have support in $X$ and since the inclusion $X\subset \bar X$ induces an inclusion in $H_2$ due to the assumed $H^2$-purity.
 The independence of these components yields now the desired inequality $\sum_{p\in  C} b_2(F_p) \le b_2(X)$. 
 
\noindent
(b).  Let 
$A_p$ be some irreducible component of $F_p$. Then the same proof as above yields that the components of the divisor $\sum_{p\in C}\cl(F_p \m A_p)$ are linearly independent in $H_2(X)$. Moreover, the general fibre $F$ is independent of these components  
\end{proof}

  Let us set the notations 
$k_p := b_1(\bar F) -b_1(\bar F_p)\ge 0$ (see the proof of Proposition \ref{p:mono}), $v_p := \#$vertical irreducible components of $\bar F_p$ in $D$, $a_p := \#$irreducible components of $\bar F_p$ which are not in $D$.

\begin{lemma}\label{l:first}
Let $f:X\to C$ be a non proper, primitive morphism from a $H^2$-pure surface onto an affine curve $C$. Then:
\begin{equation}\label{eq:first}
\sum_{p\in  \bar C} (a_p -1) + \sum_{p\in  \bar C} k_p  = \delta + 2 + b_2(X) -  b_1(X) - b_1(\bar X) - \chi(\bar F) \chi(\bar C).
\end{equation}
  
\end{lemma}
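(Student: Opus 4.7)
The strategy is to compute $\chi(\bar X)$ in two different ways and match the resulting expressions. The first computation goes through the fibration $\bar f$, and the second through the decomposition $\bar X = X \sqcup D$.

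First, I would record three Euler-characteristic ingredients. Since $\bar f$ is a proper morphism between smooth irreducible varieties with connected general fibre (as $f$, hence $\bar f$, is primitive), Zariski's connectedness yields $b_0(\bar F_p)=1$ for every $p\in\bar C$; moreover $\bar F_p$ is a 1-dimensional scheme with $b_2(\bar F_p)=a_p+v_p$ irreducible components, so
\[
\chi(\bar F_p)=1-b_1(\bar F_p)+(a_p+v_p)=1-b_1(\bar F)+k_p+a_p+v_p,
\]
while $\chi(\bar F)=2-b_1(\bar F)$. Multiplicativity of $\chi$ along the stratified locally trivial fibration (with finitely many exceptional fibres) then gives
\begin{equation}\label{eq:propfib}
\chi(\bar X)=\chi(\bar C)\chi(\bar F)+\sum_{p\in\bar C}(a_p+v_p+k_p-1).
\end{equation}

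Second, I would compute $\chi(X)$. Because $X$ is a smooth non-compact complex surface, $b_4(X)=0$; because $f$ is non-proper and $X$ is $H^2$- and $H^3$-pure, Theorem~\ref{c:points}(b) forces $b_3(X)=0$. Hence $\chi(X)=1-b_1(X)+b_2(X)$. To compute $\chi(D)$, I invoke Corollary~\ref{c:tree}: the dual graph of $D$ is a connected tree, so if $N:=\sum_{p\in\bar C}v_p+\delta$ denotes the total number of irreducible components of $D$, then the number of double points is $N-1$. Since all components are smooth, inclusion-exclusion yields
\[
\chi(D)=\sum_i\bigl(2-2g(D_i)\bigr)-(N-1)=N+1-b_1(D),
\]
where the identity $b_1(D)=\sum_i b_1(D_i)$ comes from Mayer--Vietoris for an SNC curve with tree dual graph.

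Third, and this is the key step, I claim that $b_1(\bar X)=b_1(D)$. From the long exact sequence of the pair $(\bar X,D)$,
\[
H^1(\bar X,D)\to H^1(\bar X)\to H^1(D)\to H^2(\bar X,D),
\]
the $H^2$-purity of $X$ (Lemma~\ref{l:h2}(b)) gives surjectivity of the middle arrow. On the other hand, by excision together with Poincar\'e--Lefschetz duality on $X$,
\[
H^1(\bar X,D)\cong H^1_c(X)\cong H^3(X)^{*}=0,
\]
because $b_3(X)=0$ as noted. Thus $H^1(\bar X)\to H^1(D)$ is an isomorphism, so $b_1(\bar X)=b_1(D)$, and
\[
\chi(D)= N+1-b_1(\bar X)=\Bigl(\textstyle\sum_{p}v_p\Bigr)+\delta+1-b_1(\bar X).
\]

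Finally, I would combine the pieces. Equating
\[
\chi(\bar X)=\chi(X)+\chi(D)=2-b_1(X)+b_2(X)+\sum_{p}v_p+\delta-b_1(\bar X)
\]
with \eqref{eq:propfib} and cancelling the common term $\sum_{p}v_p$ yields exactly
\[
\sum_{p\in\bar C}(a_p-1)+\sum_{p\in\bar C}k_p=\delta+2+b_2(X)-b_1(X)-b_1(\bar X)-\chi(\bar F)\chi(\bar C),
\]
which is the claimed identity.

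The main obstacle is the identification $b_1(\bar X)=b_1(D)$: one has to combine the $H^2$-purity (for surjectivity) with the $H^3$-purity via Theorem~\ref{c:points}(b) (for injectivity, through Lefschetz duality). All remaining ingredients are standard Euler-characteristic bookkeeping, the only subtle point being to take the topological (reduced) $\bar F_p$ when counting $b_2(\bar F_p)=a_p+v_p$.
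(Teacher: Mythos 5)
Your strategy --- computing $\chi(\bar X)$ once through the fibration and once as $\chi(X)+\chi(D)$ --- is a genuinely different route from the paper's. The paper keeps the same Euler-characteristic identity \eqref{eq:chi}, but gets the second relation from the portion of the long exact sequence of the pair $(\bar X,D)$ in degrees $2$ and $3$, namely $0\to H^2(\bar X,D)\to H^2(\bar X)\to H^2(D)\to H^3(\bar X,D)\to H^3(\bar X)\to 0$ (the left zero from $H^2$-purity, the right one from $H^3(D)=0$), which after Lefschetz duality gives $b_2(\bar X)=b_2(X)+\delta+\sum_p v_p-b_1(X)+b_1(\bar X)$; substituting this into \eqref{eq:chi2} yields \eqref{eq:first}. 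Your bookkeeping (connectedness of all fibres of the primitive $\bar f$, $b_2(\bar F_p)=a_p+v_p$, the fibration formula) is consistent and does reproduce the identity.

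However, as written your argument does not prove the lemma as stated, which assumes only $H^2$-purity: you use $H^3$-purity twice. First, Theorem \ref{c:points}(b) has both purities among its hypotheses, so you cannot invoke it here to get $b_3(X)=0$; second, the formula $\chi(D)=N+1-b_1(D)$ and the identification $b_1(\bar X)=b_1(D)$ use the connectedness of $D$, which is exactly the content of $H^3$-purity by Lemma \ref{l:h2}(a) (Corollary \ref{c:tree} only gives the tree property of the dual graph). The paper's proof needs neither $b_3(X)=0$ nor connectedness of $D$. Your route can be repaired without these extra hypotheses: keep $b_3(X)$ and $b_0(D)$ in the computation. The degrees $0$ and $1$ part of the pair sequence, together with the surjectivity of $H^1(\bar X)\to H^1(D)$ coming from $H^2$-purity, gives $b_1(D)=b_1(\bar X)+b_0(D)-1-b_3(X)$, hence $\chi(D)=b_0(D)-b_1(D)+N=N+1+b_3(X)-b_1(\bar X)$, while $\chi(X)=1-b_1(X)+b_2(X)-b_3(X)$; the terms $b_3(X)$ and $b_0(D)$ cancel in $\chi(X)+\chi(D)$ and one recovers $2-b_1(X)+b_2(X)+N-b_1(\bar X)$ with no appeal to Theorem \ref{c:points}(b). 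In the only place the lemma is used (Theorem \ref{t:one}, where both purities hold and $D$ is connected by the paper's standing convention) your proof would suffice, but for the lemma as stated it assumes strictly more than is given and uses much heavier machinery than necessary.
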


\begin{proof}
 One has the following relation between Euler characteristics, which holds for any regular map from a projective surface to a projective curve:
\begin{equation}\label{eq:chi}
 \chi (\bar X) - \chi (\bar F)\chi (\bar C) = \sum_{p\in \bar C} [ \chi (\bar F_p) - \chi (\bar F)].
\end{equation}

Using the primitivity assumption,  formula \eqref{eq:chi} yields:
\begin{equation}\label{eq:chi2} 
\begin{array}{c}
 2- 2b_1(\bar X) + b_2(\bar X) - \chi(\bar F) \chi(\bar C) = \\ 
 \sum_{p\in \bar C}[1-b_1(\bar F_p) + b_2(\bar F_p) -2 + b_1(\bar F)] 
=  \sum_{p\in \bar C}[ k_p + v_p + a_p - 1 ].
\end{array}
\end{equation}

Let us now consider the exact sequence of the pair $(\bar X, D)$:
\begin{equation}\label{eq:h} 
 0\to H^2(\bar X, D) \to H^2(\bar X) \to H^2(D) \to H^3(\bar X, D)\to H^3(\bar X) \to 0
\end{equation}
where the 0 at the left side is due to the $H^ 2$-purity of $X$ and the 0 at the right hand side comes from $H^3(D) = 0$. By Lefschetz duality (resp. Poincaré duality) we have the isomorphisms $H^2(\bar X, D) \simeq H_2(X)$, $H^3(\bar X, D) \simeq H_1(X)$, $H^2(\bar X) \simeq H_2(\bar X)$,  $H^3(\bar X) \simeq H_1(\bar X)$. We also have $b_2(D) = \delta + \sum_{p\in \bar C} v_p$, since $D$ is a tree with vertical and horizontal components.
We  obtain the following relation from \eqref{eq:h}:
\begin{equation}\label{eq:betti2}
 b_2(\bar X) = b_2(X) + \delta + \sum_{p\in \bar C} v_p - b_1(X) + b_1(\bar X)
\end{equation}
Substituting this expression of $b_2(\bar X)$ in \eqref{eq:chi2} yields \eqref{eq:first}.
\end{proof}

\subsection{Proof of Theorem \ref{t:one}}
The total number of reducible fibres of $f$ is at most $\sum_{p\in  C} (a_p -1)$.

 If $f$ is proper then $F_p$ is compact and  $a_p = b_2(F_p)$. Then the result follows directly from Lemma \ref{eq:second}(b), since $\delta =0$. 

Let us therefore assume in the following that $f$ is non-proper. The proof follows from a case-by-case study. 

\smallskip
\noindent
\textit{Case  $g(\bar C)>0$.}  By Theorem \ref{t:genus}, $f$  has a unique horizontal component, thus $\delta =1$, and is  simple. This implies that there is a single puncture at infinity for all fibres. Since the components of the fibres are separated by the resolution, it follows that any fibre of $f$ has a single affine irreducible component. We therefore have  $a_p -1 = b_2(F_p)$. Taking the sum over $p$ we get $\sum_{p\in  C} (a_p -1) = \sum b_2(F_p)\le b_2(X)$ from Lemma \ref{eq:second}(a). Our Theorem follows. 

\smallskip
\noindent
\textit{Case  $g(\bar C)=0$ and  $|\bar C  \m C| > 2$.} By  Theorem \ref{t:genus}, we get that $f$ is simple. 
Then as in the previous paragraph, we coclude the requered  inequality.

\smallskip
\noindent
\textit{Case 
$g(\bar C)= 0$ and $|\bar C \m C| = 2$. }
By definition, $a_p=0$ if $p\in \bar C \m C$. 
Under the hypotheses of $H^2$- and $H^3$-pure surface $X$ and $|\bar C \m C| = 2$, by the proof of Theorem \ref{c:points}(d), we  have $\delta=1$, $b_1(\bar F_p)=0$ for $p\in \bar C \m C$, $C\cong \bC ^*$  and $b_1(\bar X) = 0$. Moreover, $b_3(X)=0$ by Theorem \ref{c:points}(b). Moreover if ${\bar X}_C:={\bar f}^{-1} (C)$ denote the inverse image $C$, then we have $1=H^1(C)\subset H^1(\bar X_C) \subset H^1(X)$, hence $b_1(X) \ge 1$.

 Substituting these in \eqref{eq:first} we get :
\[ 
 \sum_{p\in  C} (a_p -1) + \sum_{p\in  C} k_p + b_1(X) -1 =  b_2(X).
\]
Now we have $k_p \ge 0$ for any $p\in C$. Hence the inequality: $\sum_{p\in  C} (a_p -1) \le b_2(X)$ which proves our theorem in this case.

\smallskip
\noindent
\textit{Case  $C \simeq \bC$.} Let us denote by $\ity$ the point $\bar C \m C$. Substituting in \eqref{eq:first} the terms $\chi (\bar C)=2$, $a_{\infty} =0$, $k_{\infty} = b_1(\bar F) - b_1(\bar F_{\infty})$, we get:
\begin{equation}\label{eq:new}
\sum_{p\in  C} (a_p-1 ) + 
\sum_{p\in  C} k_p  - b_1(\bar F) + b_1(X)  =   \delta - 1  + b_2(X)  + [b_1(\bar F_\ity)
 - b_1(\bar X)]  .
\end{equation}
Let us first show that the last couple of terms on the right hand side is cancelling. The following exact sequence is exact by using the $H^3$-purity (left) and the $H^2$-purity (right):
\[ 
0 \to H^1(\bar X, D) \to H^1(\bar X) \to H^1( D) \to 0,
\]
where $H^1(\bar X, D)\simeq H^3(X) =0$ by Theorem \ref{c:points}(b). But $H^1(D)\simeq H^1(\bar F_\infty)$ since $D$ is a tree and since $h^1 (D_i) =0$ for any the horizontal or vertical remaining divisor $D_i$, by the relation \eqref{eq:equality0}. 
Hence $b_1(\bar X) =b_1(\bar F_\ity)$.

Next, we show that $\sum_{p\in  C} k_p  - b_1(\bar F) + b_1(X)$ is non-negative.
Indeed, since $C \simeq \bC$, we may contract $\bC$ to a union of small disks around the set of critical values $A= \{ p_1, \ldots \}$ and union with simple non-intersecting paths connecting them with some exterior point in $\bC$. Then the pullback of this total union is homotopy equivalent to $\bar X_{\bC}$, and by using a Mayer-Vietoris argument, we have the following exact sequence:
\[ 
\oplus_i H_1(\bar F) \stackrel{\nu}{\to} H_1(\bar F)\oplus \oplus_i H_1(\bar F_{p_i}) \to H_1(\bar X_{\bC}) \to 0
\]
which shows that $\sum_{p\in C} k_p  - b_1(\bar F) + b_1(\bar X_{\bC}) \ge 0$. We may replace here $b_1(\bar X_{\bC})$ by $b_1(X)$ since $b_1(X) \ge b_1(\bar X_{\bC})$. Therefore from \eqref{eq:new} we finally get the inequality:
\[ \sum_{p\in  C} (a_p-1 )  \le   \delta - 1  + b_2(X)  
\]
which proves our claim. This ends the proof of our  theorem.
\fin

\begin{note}\label{n:1}
In the case $g(\bar C)=0$ one may also build a uniform proof starting from the approach used in the case $C \simeq \bC$.
\end{note}

\begin{note}\label{n:2}
We may actually prove a sharper statement. Let $X$ be a smooth surface and let $\bar X$ be some compactification of $X$ with $D := \bar X \m X$ a normal crossings divisor. Let $H^1(\Omega^1_{\bar X})$ be the cohomology group in $H^2(\bar X, \bC)$ generated by algebraic
cycles. Let $\Omega(X)$ denote the vector subspace of $H^2(X)$ generated by complete curves in $X$, that is $\Omega(X) := \im (H^1(\Omega^1_{\bar X}) \to H^1( \Omega^1_{\bar X}\langle D \rangle))\subset H^2(X)$, and let $\omega(X) := \dim \Omega(X)$.

Counting the complete curves in $X$ like done in  Lemma \ref{eq:second} (a) and (b), we obtain  $\sum_p b_2(F_p) \le \omega(X)$ in the non-proper case and $1+ \sum_p (b_2(F_p)- 1) \le \omega(X)$ in the proper case.  We conclude that one may replace $b_2(X)$ by $\omega(X)$ in Theorem \ref{t:one} and since by definition $\omega(X) \le b_2(X)$, we get a sharper inequality.

\end{note}

%%%%%%%%%%%%%%%%%%%%%%%%%%%%%%%%%%%%%%%%%%%%%%%%%%%%%%%%%%%%%%%%%%%%%%%%%%%%%%%%%%%%%%%%%%%%%%%%%%%%%%%%%%%%%%%%%%%%%%%%%%%%%%%%%%%%%%%%%%

\section{Trivial mondromy with irreducible fibres}\label{localtriviality}

In this section we generalize the theorem of Dimca \cite{Di} that if a polynomial in $2$ variables  has irreducible 
fibers and  trivial monodromy, then it is an algebraically trivial fibration. The algebraicity occurs only because of the use of Abhyankar-Moh theorem which is possible only in case of $X= \bC^2$.

We therefore begin with some examples to explain what we
may expect in our more general context, namely when $X$ is a $H^2$ and $H^3$-pure surface. 

\begin{remark}\label{r:1}
 {\em Even if $f$ has trivial monodromy and irreducible fibres, one may not even get analytic local
triviality}.   Indeed, consider a smooth projective curve $\bar C$ of  genus $g(\bar C)>1$ and a point 
$p\in \bar C$. Let  ${\bar X}:=\bar C\times \bar C$ and let $D:=\Delta_{\bar C}\cup \{p\}\times \bar C$ be the union of the diagonal $\Delta_{\bar C}$ and a fibre.  Then
$X:=\bar X\m D$ is an $H^2$ and $H^3$-pure smooth surface. Let $C :=  \bar C \m \{p\}$. Then the restriction $f:X\to C$ of the first projection  has
trivial monodromy and irreducible fibres. It is not analytically locally trivial since  $\bar C\m \{x\}$ 
and $\bar C\m \{y\}$ cannot be isomorphic for all points $x$ and $y$ due to the fact that the set of automorphisms of a curve of genus $>1$ is finite. However $f$ is locally topologically  trivial.
\end{remark}

\begin{remark}\label{r:2} 
{\em Even if $f$ has  trivial monodromy and irreducible fibres, it can still have nonreduced fibres 
and hence may not be topologically locally trivial.}  To show this, consider the quadric hypersurface $Z$ in $\bC^3$ defined 
by the equation $x^2 - yz =0$. The $y$-coordinate  projection has 
every fibre isomorphic to $\bC$ but the fibre over $0$ is multiple,  defined by $x^2=0$ in the $(x,z)$ plane. This 
projection $Z\to \bC$ has a section  $\sigma: \bC\to Z$ given by $\sigma (y) = (0,y,0) $. Then $X:= Z\m \sigma(\bC)$ 
is a smooth $H^2$ and $H^3$-pure surface and the restriction of $f$ to $X$ has trivial monodromy since the fibres are 
affine lines. Each fibre is irreducible, isomorphic  $\bC^*$,  but there is a nonreduced fibre and hence cannot be 
locally trivial. 
\end{remark}

These remarks support the sharpness of the following theorem:

\begin{theorem}\label{t:fibration}\label{c:fibration}  
 Let $f:X\to C$ be a  morphism from a $H^2$ and $H^3$ pure surface onto an affine curve, 
with trivial monodromy group and such that all fibres of $f$ are  irreducible. Then:

\begin{enumerate}
\item All fibres of $f$ are diffeomorphic. 
\item If moreover all fibres are assumed to be reduced, then $f$ is a locally trivial fibration, i.e. the bifurcation set of $f$ is empty. 
\item If $f$ has at least one nonreduced fibre, 
then the general fibre of $f$ is isomorphic either to $\bC$ or  to $\bC^*$, or to an elliptic curve. 

\end{enumerate}

\end{theorem}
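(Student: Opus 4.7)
\emph{Plan for (a).} I would first apply Proposition \ref{p:trivmon} to conclude that $f$ is primitive and simple and that $\Mon \bar f = 1$; Proposition \ref{p:mono} then gives $H^1(\bar F_p) \cong H^1(\bar F)$ for every $p\in \bar C$, so $b_1(\bar F_p)$ is independent of $p$. Simplicity makes each horizontal $D_i$ a section of $\bar f$, meeting every $\bar F_p$ transversally in one smooth point, and by Corollary \ref{c:tree} the vertical part of $\bar F_p$ is a tree of rational curves attached to $\tilde F_p := \cl(F_p)\subset \bar X$, so that $b_1(\bar F_p) = b_1(\tilde F_p)$. Irreducibility of $F_p$ forces $\tilde F_p$ to be a single curve, and triviality of $\Mon^1 \bar f$ forbids vanishing cycles; hence $\tilde F_p$ cannot carry a node or cusp, since Picard--Lefschetz would then produce a local monodromy acting non-trivially on the corresponding vanishing class in $H_1(\bar F)$. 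Thus $\tilde F_p$ is smooth of the same genus as $\bar F$, meets the horizontal divisors in exactly $\delta$ transversal points, and $F_p$ is diffeomorphic to $F$.

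\emph{Plan for (b).} Under the additional reducedness hypothesis, part (a) tells us each $F_p$ is topologically a smooth curve; combined with reducedness and the smoothness of $X$, the local ring at each closed point of $F_p$ is a regular one-dimensional domain, so $F_p$ is scheme-theoretically smooth. Hence $f$ is a smooth morphism. Applying Thom's first isotopy lemma to the proper extension $\bar f : \bar X \to \bar C$ with the stratification $(\bar X\m D,\, D)$ over a small disc $\Delta$ around any $p\in C$ yields a $C^\infty$-trivialization $f^{-1}(\Delta)\cong \Delta\times F$, so the bifurcation set of $f$ is empty.

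\emph{Plan for (c).} Suppose some fibre $mF_0$ is non-reduced with $m\ge 2$; by (a) the reduced $F_0$ is diffeomorphic to $F$. I would combine three ingredients to restrict $F$. First, the combinatorics of the tree $D$ near the image point $p_0$: simplicity of $\bar f$ together with a non-reduced $F_0$ forces the local picture to consist of a single reduced curve $F_0$ carrying the full multiplicity, with only rational vertical components from the tree attached. Second, the Euler-characteristic identity of Lemma \ref{l:first}, specialized by $k_p = 0$ (trivial monodromy) and $a_p - 1 = 0$ for $p\in C$ (irreducible fibres), leaves a tight relation among $g(\bar F)$, $\delta$, $b_2(X)$ and $g(\bar C)$. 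Third, a cyclic base change $\bar C' \to \bar C$ of degree $m$ totally ramified at $p_0$, followed by normalization of $\bar X\times_{\bar C}\bar C'$: the new fibration acquires a reduced fibre over the ramification point, and the normalization map is an unramified $\bZ/m$-cover on a Zariski neighborhood of $F_0$, so the smooth model of $F$ must admit non-trivial unramified covers compatible with the above constraints. Running these three together in the spirit of Zaidenberg's argument --- with his affine/Stein hypothesis replaced by monodromy triviality via the purity of $X$ --- forces $(g(\bar F),\delta)$ into $(0,1)$, $(0,2)$ or $(1,0)$, i.e.\ $F\cong \bC$, $\bC^*$, or a smooth elliptic curve.

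\emph{Main obstacle.} The third ingredient of (c) is the delicate step: one must verify that the normalization of $\bar X\times_{\bar C}\bar C'$ remains well-behaved enough (compatible with the purity hypotheses) to produce an honest cyclic cover, and then argue via Riemann--Hurwitz and the tree structure of $D$ that neither higher-genus nor multiply-punctured fibres can survive the resulting numerical constraints. Parts (a) and (b) are clean consequences of Picard--Lefschetz and Thom--Ehresmann respectively; the work is concentrated in (c), where one is essentially rebuilding Zaidenberg's local structure theorem from monodromy data rather than from Steinness.
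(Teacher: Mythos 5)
Your plans for (a) and (c) follow the same general lines as the paper (trivial monodromy plus Proposition \ref{p:mono} to equalize $b_1$ of the fibres, then a base change ramified over the multiple fibres followed by normalization and Riemann--Hurwitz), but part (b) contains a genuine gap. You propose to apply Thom's first isotopy lemma to $\bar f:\bar X\to\bar C$ over a disc $\Delta\subset C$ with the stratification $(\bar X\m D,\,D)$. This fails as soon as $D$ has vertical components lying over points of $C$: on such a component $\bar f$ is constant, so the restriction of $\bar f$ to that stratum (and to the point-strata where it meets other components of $D$) is not a submersion onto $\Delta$, and indeed $\bar f$ itself is \emph{not} locally topologically trivial over $\Delta$ there, so no stratified trivialization of the pair $(\bar X,D)$ can exist. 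Such vertical components over $C$ do occur under the hypotheses of (b) (e.g.\ take a trivial family, blow up a point where a horizontal section meets a fibre, and put the exceptional curve into $D$; all fibres stay irreducible, reduced and isomorphic, the surface stays $H^2$- and $H^3$-pure), and you give no argument that a compact resolution without them can be chosen. Handling exactly this configuration is the substance of the paper's proof of (b): the vertical parts $D_t$ over $t\in C$ are negative definite and are contracted by Grauert's theorem to a normal surface $X'$; one then has to \emph{prove} that $X'$ is smooth (via equality of arithmetic genera $p_a(\cl(F_t))=p_a(\bar F)$) and that the new fibres are reduced (generically reduced Cartier divisors on a Cohen--Macaulay surface are reduced), before the relative Ehresmann theorem applies. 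Your observation that reducedness plus (a) makes $f$ a smooth morphism is correct but insufficient: a non-proper smooth surjection with diffeomorphic fibres need not be locally trivial, and the compactified family you invoke is precisely the one that is not trivial near $D_t$.

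Concerning (c), your outline (cyclic base change totally ramified at the multiple fibre, normalization, \'etale cover of the reduced fibre, Riemann--Hurwitz plus the tree structure of $D$) is the right strategy and matches the paper's, which uses a Galois cover $g:C'\to C$ with ramification indices $m_i$, the normalized fibre products $h:Y\to C'$, $h':Y'\to C'$, a dual-graph ``loops'' argument forced by the sections at infinity, and an arithmetic-genus comparison $p_a(h'^{-1}(t'))\ge r\,p_a(h'^{-1}(t))$ settled case by case ($p_a>1$, $=1$, $=0$) by Riemann--Hurwitz. But you explicitly stop at the point you call the main obstacle, so the key case analysis that yields $\bC$, $\bC^*$ or an elliptic curve is not actually carried out; also the numerical relation from Lemma \ref{l:first} that you propose as an ingredient is not what closes the argument --- the paper closes it with the genus comparison on the normalized cover, not with the Euler-characteristic identity. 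Finally, in (a) your Picard--Lefschetz shortcut (``a node or cusp would force nontrivial local monodromy'') needs the same supporting facts the paper uses --- constancy of the number of points at infinity via the tree structure of $D$, rationality of the extra vertical components via \eqref{eq:inequality}, and the equality $b_1(\bar F_t)=b_1(\bar F)$ --- since the degeneration of $\bar F_t$ is not a single vanishing cycle; as written it is looser than, but reparable to, the paper's argument.
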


\begin{proof} 
(a). Consider $\bar{f}: {\bar X} \to {\bar C}$, a compact resolution of $f$ with a smooth projective completion 
$\bar X$ of X and  $D:= {\bar X}\m X$, a divisor with simple normal crossings. For $p\in {\bar C}\m C$, we have ${\bar F}_p\subset D$. Since $f$ has trivial mondromy,  $f$ is simple and primitive by Proposition \ref{p:trivmon}. Also  Proposition~\ref{p:mono} implies that $H^1({\bar F}_t)\cong H^1({\bar F}_p)$ 
 for all $t\in {\bar C}$, where ${\bar F}_t$ is the  fibre of $\bar{f}$. Let $F_t$ be the 
fibre of $f$ and $\cl(F_t)$ denote the closure of $F_t$ in $\bar X$.  Since $F_t$ is irreducible, every other components of 
${\bar F}_t$ are contained in $D$. Moreover $D$ being a tree by the $H^2$-purity, the number of points in $\cl(F_t)\m F_t$ is the same. Indeed, any reduction
 in this number would produce the intersection of two horizontal components 
which creates a loop as they alredy intersect the fibre $F_p$. On the other hand, an increase in this number would imply that $D$ is not connected. By \eqref{eq:inequality} each vertical component of $D$ that is not contained in 
${\bar F}_p$ is a rational curve.  Thus we get the isomorphisms 
$ H^1({\bar F}_t) \cong H^1(\cl(F_t)) \cong H^1({\bar F}_p)$ for all $t\in C$. 
This shows that all $F_t$ have the same topology and in particular that all have nonsingular reduced structure.  Since open Riemann surfaces, they are also diffeomorphic.\\

\noindent 
(b). Let $D_t$ be the union of vertical components of $D$ lying over $t\in C$. Every connected 
component of $D_t$  intersects $\cl(F_t)$ in a unique point and 
 distinct connected components of $D_t$ intersect $\cl(F_t)$ in distinct points. Each connected 
component of  $D_t$ is negative definite and hence blows down to a normal analytic singularity by Grauert \cite[Satz 3.1]{Gr}.
After blowing down all connected components of $D_t$, for points $t\in  C$, we obtain a normal compact analytic surface $X'$ together with 
the induced map $f': X'\to {\bar C}$. Now for each $t\in C$, the arithmetic genus  $p_a(\cl(F_t))$  of the closure $\cl(F_t)$ is equal to the arithmetic 
genus  $p_a({\bar F})$ of the general fibre of $\bar f$. This follows as the monodromy is trivial and all other components of ${\bar F_t}$ are contained 
in $D$ which is a  tree of smooth rational curve. Let ${\bar F_t}'$  denote the fibre of $f'$ over $t\in C$ and  $\bar F'$ denote the general fibre, 
respectively. Then the  map  $\cl(F_t)\to {\bar F_t}'$ is bijective and hence we have:
$$ p_a ({\bar F'}) = p_a({\bar F})  = p_a(\cl(F_t)) \leq p_a(({\bar F'}_t)_{\rm red})$$ 
We also have
$$ p_a({\bar F'}_t) = p_a({\bar F'}) = p_a({\bar F}). $$

If ${\bar F_t}'$ is reduced, then all these will be equal and  it follows that ${\bar F_t}'$ is smooth, for all $t\in C$.
Since $C$ is also smooth, it implies that $X'$ is smooth. All fibres have the same number of points at infinity and these are smooth points, we
conclude that $X\to C$ is a locally trivial fibration by  the relative Ehresmann Fibration Theorem.

To show now that ${\bar F_t}'$ is reduced, we note that all fibres of $f'$ are generically reduced as fibres of $f$ are 
reduced and $f$ and $f'$ coincide on $X$. So we only need to check that the fibre ${\bar F_t}'$ has no embedded component. 
On the other hand $X'$ is a normal surface and hence is Cohen Macaulay. On a Cohen Macaulay germ, any Cartier divisor which 
is generically reduced is a reduced divisor. Hence ${\bar F_t}'$ is reduced proving the Theorem.\\

\noindent
(c).
By (a), all fibres of $f$ are smooth (when considered with reduced structure)  and diffemorphic. Let $t_1, t_2, \ldots, t_r\in C$ be the points where $f$ has 
multiple fibres with multiplicities $m_1,m_2, \ldots, m_r>1$ respectively. Let $g:C'\to C$ be a ramified Galois cover of $C$ of ramification index $m_i$ 
at $t_i\in C$ for $i=1,2, \ldots, r$, which means that at any ramification point $t'_i\in C'$ over $t_i$ the index is the same, equal to $m_i$. One can  always find such a cover, eventually with some more branch points in $C$.   Let $h:Y\to C'$ and $h':Y'\to C'$ be the normalization of the  fibre product of $f,g$ and $f',g$ respectively.  This construction implies that all fibres of $h:Y\to C'$ are  smooth reduced 
and the restriction map $h^{-1}(t'_i) \to f^{-1}(t_i)_{\rm red}$ is \'etale, which fact will be used later in this proof. 
 
%By Reimann Hurwitz it follows that any covering produces strictly higher genus curves unless it is an isomorphism of each components if the genus $>1$.
%In case of genus $\geq 1$, the fibre of $h$ over $t_i$ cannot split into more than one components as that would again produce higher genus special fibre as the 
%genera's of compmonents add up, which is not possible. Since $X'\to {\bar C}$ has sections, we see that the normalization of the fibre product  $h': Y'\to C'$ 
%also has sections and the number of sections is the same as the number of sections of $X'\to {\bar C}$. This implies that in case of genus $1$, 
%it cannot be \'etale either. Hence we conclude that the  family $f$ is a family of rational curves.

%Recall $f:X\to C$ be the given map and $f':X'\to C$ be a relative compactification as in
%the proof of Theorem 5.3. $g:C' \to C$ be the map with ramification index over points
%$t_i$ is $m_i >1$.  Let $Y$ be the fibre product of $f \& g$ and $Y' $ be the normalization
%of the fibre product of $f' \& g$. Then $Y'$ is a normal quasi projective variety with a
%proper map $h' :Y'\to C'$.  

\smallskip
\noindent
\textbf{Loops in components of fibres of $h':Y'\to C'$.}  Note that  $h'^{-1}(t')\cong f'^{-1} (g(t'))$ for all $t'\in C'$ 
such that $g(t')\neq t_i$.  The only structural 
change happens at fibres  $h'^{-1}(t'_i)$ for  $t'_i \in g^{-1}(t_i)$. For any $t'_i\in g^{-1}(t_i)$,  $h'^{-1}(t'_i) \to f'^{-1} (t_i)$  is a ramified cover of degree $m_i$, which is \'etale 
over $f^{-1}(t_i)$.  Let $D_1, \cdots, D_k$ be the horizontal components of $X'\to \bar C$.  By the definition of $Y$ and of $Y'$ the inverse image $D'_i$ of $D_i$ in $Y'$  is a curve isomorphic to $C'$ which forms a section of $h'$. Hence every component of the fibre ${h'}^{-1}(t'_i)$
has $k$ points at infinity (i.e. points that are not in the fibre of $h$) and they are the same for any $i$. Then the projection ${h'}^{-1}(t'_i) \to f'^{-1}(t_i)$ is totally 
ramified at these points. Consequently, if $k>1$ and if ${h'}^{-1}(t'_i)$  is reducible then there are loops in the dual graph of this fibre. 

\smallskip
\noindent
\textbf{ Genus of the components of fibres of $h':Y'\to C'$. }
Without loss of generality let us choose  $i=1$ and  fix a point $t'_1$ lying over  $t_1\in C$. Let ${h'} ^{-1}(t'_1):= C'_1\cup\cdots \cup C'_r$ be 
the decomposition of the fibre ${h'} ^{-1}(t'_1)$ into its irreducible components.  Then $p_a(C_j)\geq p_a({f'}^{-1}(t_1))$ for all $j$, and moreover  $f'$ 
and $h'$ have isomorphic general fibres.  Let us denote by $t'\in C'$ be a general point (near $t'_1$) and let  $t:=g(t')\in C$.  Then we get:

$$  p_a({h'}^{-1}(t'))  =  p_a({h'} ^{-1}(t'_1)) \geq  \sum_{j=1}^r  p_a(C_j) \geq r p_a( {f'}^{-1}(t_1)) =r p_a({f'}^{-1}(t)) = rp_a({h'}^{-1}(t)) $$

The first inequality follows from the fact that the  fibre  ${h'} ^{-1}(t'_1)$ is generically reduced and hence is reduced as $Y'$ is Cohen Macaulay, and the fact that the arithmetic genus of a reduced curve is at least the sum of the genera of each component. Now we analyse the 
following cases.

{\underline {\it Case 1:}} $p_a({h'}^{-1}(t'))>1$. From the above sequence it follows that if $p_a({h'}^{-1}(t')) >0$ then $r=1$ and therefore all the inequalities are equalities.  But if $p_a({h'}^{-1}(t)) >1$, the Riemann-Hurwitz theorem implies $p_a(C_1) >  p_a( f'^{-1}(t_1))$,  contradiction. 

{\underline{\it Case 2:}} $p_a({h'}^{-1}(t'))=1$. Then again $r=1$ and all inequalities are equalities, thus  $p_a({f'}^{-1}(t)) =  p_a(C_1)= 1$. By the first part of Theorem~\ref{t:fibration}, the  special 
fibre of $f'$ has the same genus, i.e.,   $p_a(f'^{-1}(t_1)) = 1$. 
Since  $C_1 \to f'^{-1}(t_i)$  is a degree $m_i>1$ covering of curves of genus 1 with total ramifications at the points at infinity,   the Riemann-Hurwitz argument yields again a contradiction.

 We therefore conclude that $p_a({h'}^{-1}(t))$ must be equal to $0$, 
hence all the curves in the above displayed sequence are rational. 

{\underline{ \it Case 3:}} $p_a({h'}^{-1}(t'))=0$.  This implies that the fibre ${h'}^{-1}(t'_1)$ cannot have ``loops'' in its dual graph. If $k>1$, the above argument of ``loops in components'' shows that this fibre is irreducible. Thus
the fibres  ${h}^{-1}(t'_1)$ and  ${f}^{-1}(t')$ are a $\bP^1$ minus $k$ points and the map between them is \'etale, which implies that $k=2$, i.e. exactly two sections at infinity. It follows that the general fibre of $f$ must be $\bC^*$.

If $k=1$ then the general fibre of $f$ is $\bC$, and each fibre $h^{-1}(t'_i)$ consists of $m_i$ disjoint copies of $f^{-1}(t_i)\cong \bC$, as there are no non-trivial 
\'etale covers of $\bC$. 
\end{proof}

\begin{remark}
From the last part of the above proof,  under the same hypotheses, we moreover get the following byproduct:

If the general fibre of $f$ is  $\bC^*$, then all the fibres of $h'$ are
irreducible and, as in the proof of Theorem~\ref{t:fibration}, $h'$  is a locally trivial fibration.  Since every fibre of $h'$ is a smooth $\bP^1$,
it follows that $Y'$ is  smooth and $Y'\to X'$ is an \'etale cover of $X'\m \cup_{i,j} \{ {f'}^{-1}(t_i)\cap D_j\}$. Hence $X'$ is the quotient of a 
smooth surface by a finite group (in fact the Galois group of $C'\to C$).

If the general fibre of $f$ is  $\bC$, then $Y'\to X'$ is an \'etale cover outside $\cup_{i} \{ {f'}^{-1}(t_i)\cap D_1 \}$, but $h'$ cannot be a fibration and actually $Y'$ cannot be a non-singular variety.
\end{remark}

%On the other hand, the example in our 
%Remark \ref{r:2} shows that $f$ may not be locally trivial when the general fibre is $\bC^*$. Also it follows from %the proof  that if $f$ has 
%multiple fibres then the compactification $X'$ has at worst cyclic quotient singularities.

\end{document}